\numberwithin{equation}{section}
\newtheorem{theorem}{Theorem}[section]
\newtheorem{lemma}[theorem]{Lemma}
\newtheorem{remark}[theorem]{Remark}
\newcommand{\argmin}{\mathop{\rm argmin}}
\newcommand{\br}{\mathbb{R}}
\newcommand{\Prox}{{\rm Prox}}
\newcommand{\half}{\frac{1}{2}}
\newcommand{\Line}{\mathop{\rm Line}}
\newcommand{\Tr}{\mathrm{Tr}}
\newcommand{\st}{\mathrm{s.t. }}
\newcommand{\be}{\begin{equation}}
\newcommand{\ee}{\end{equation}}
\newcommand{\ba}{\begin{array}}
\newcommand{\ea}{\end{array}}
\newcommand{\normone}[1]{\| #1 \|_1}
\newcommand{\normtwo}[1]{\| #1 \|}
\newcommand{\abs}[1]{| #1 |}
\newcommand{\inp}[2]{\langle #1, #2 \rangle}
\title{Geometric Descent Method for Convex Composite Minimization }
\author[1]{Shixiang Chen}
\author[1]{Shiqian Ma}
\author[2]{Wei Liu}
\affil[1]{Department of SEEM, The Chinese University of Hong Kong, Hong Kong}
\affil[2]{Tencent AI Lab, Shenzhen, China}
\begin{document}
\maketitle

\begin{abstract}
In this paper, we extend the geometric descent method recently proposed by Bubeck, Lee and Singh \cite{bubeck2015geometric} to tackle nonsmooth and strongly convex composite problems. We prove that our proposed algorithm, dubbed geometric proximal gradient method (GeoPG), converges with a linear rate $(1-1/\sqrt{\kappa})$ and thus achieves the optimal rate among first-order methods, where $\kappa$ is the condition number of the problem. Numerical results on linear regression and logistic regression with elastic net regularization show that GeoPG compares favorably with Nesterov's accelerated proximal gradient method, especially when the problem is ill-conditioned.
\end{abstract}

\section{Introduction}
Recently, Bubeck, Lee and Singh proposed a geometric descent method (GeoD) for minimizing a smooth and strongly convex function \cite{bubeck2015geometric}. They showed that GeoD achieves the same optimal rate as Nesterov's accelerated gradient method (AGM) \cite{Nesterov-1983,NesterovConvexBook2004}. In this paper, we provide an extension of GeoD that minimizes a nonsmooth function in the composite form:
\be\label{composite}
\min_{x\in\br^n} \ F(x) := f(x)+h(x),
\ee
where $f$ is $\alpha$-strongly convex and $\beta$-smooth (\textit{i.e.}, $\nabla f$ is Lipschitz continuous with Lipschitz constant $\beta$), and $h$ is a closed nonsmooth convex function with simple proximal mapping. Commonly seen examples of $h$ include $\ell_1$ norm, $\ell_2$ norm, nuclear norm, and so on.

If $h$ vanishes, then the objective function of \eqref{composite} becomes smooth and strongly convex. In this case, it is known that AGM converges with a linear rate $(1-1/\sqrt{\kappa})$, which is optimal among all first-order methods, where $\kappa=\beta/\alpha$ is the condition number of the problem. However, AGM lacks a clear geometric intuition, making it difficult to interpret. Recently, there has been much work on attempting to explain AGM or designing new algorithms with the same optimal rate (see,
\cite{Su-Boyd-Candes-NIPS-2014,Attouch-2016-MP,bubeck2015geometric,Lessard-Recht-Packard-siopt-2016,Wibisono-PNAS-2016}). In particular, the GeoD method proposed in \cite{bubeck2015geometric} has a clear geometric intuition that is in the flavor of the ellipsoid method \cite{bland-goldfarb-todd-ellipsoid-survey}. The follow-up work \cite{bubeck2016black,drusvyatskiy2016optimal} attempted to improve the performance of GeoD by exploiting the gradient information from the past with a ``limited-memory'' idea. Moreover, Drusvyatskiy, Fazel and Roy \cite{drusvyatskiy2016optimal} showed how to extend the suboptimal version of GeoD (with the convergence rate $(1-1/\kappa)$) to solve the composite problem \eqref{composite}. However, it was not clear how to extend the optimal version of GeoD to address  \eqref{composite}, and the authors posed this as an open question. In this paper, we settle this question by proposing a geometric proximal gradient (GeoPG) algorithm which can solve the composite problem \eqref{composite}. We further show how to incorporate various techniques to improve the performance of the proposed algorithm.

{\bf Notation.} We use $B(c,r^2)=\left\lbrace  x|\normtwo{x-c}^2\leq r^2\right\rbrace $ to denote the ball with center $c$ and radius $r$. We use $\Line(x,y)$ to denote the line that connects $x$ and $y$, \textit{i.e.}, $\{x+ s(y-x), s\in\br\}$.
{For fixed $t\in(0,1/\beta]$, we denote $x^+ := \Prox_{th}(x-t\nabla f(x))$, where the proximal mapping $\Prox_h(\cdot)$ is defined as $\Prox_h(x) := \argmin_z \ h(z) + \half\|z-x\|^2$. The proximal gradient of $F$ is defined as $G_t(x):=(x-x^+)/t$. It should be noted that $x^+ = x - tG_t(x)$. We also denote $x^{++} := x - G_t(x)/\alpha$. Note that both $x^+$ and $x^{++}$ are related to $t$, and we omit $t$ whenever there is no ambiguity.}


The rest of this paper is organized as follows. In Section \ref{sec:geoM}, we briefly review the GeoD method for solving smooth and strongly convex problems. In Section \ref{sec:GeoPG}, we provide our GeoPG algorithm for solving nonsmooth problem \eqref{composite} and analyze its convergence rate. We address two practical issues of the proposed method in Section \ref{sec:practical}, and incorporate two techniques: backtracking and limited memory, to cope with these issues. In Section \ref{sec:num}, we report some numerical results of comparing GeoPG with Nesterov's accelerated proximal gradient method in solving linear regression and logistic regression problems with elastic net regularization. Finally, we conclude the paper in Section \ref{sec:conclusion}.

\section{Geometric Descent Method for Smooth Problems}\label{sec:geoM}

{The GeoD method \cite{bubeck2015geometric} solves \eqref{composite} when $h\equiv 0$, in which the problem reduces to a smooth and strongly convex problem $\min \ f(x)$. We denote its optimal solution and optimal value as $x^*$ and $f^*$, respectively.
Throughout this section, we fix $t =1/\beta$, which together with $h\equiv 0$ implies that $x^+ = x-\nabla f(x)/\beta$ and $x^{++} = x-\nabla f(x)/\alpha$.}
We first briefly describe the basic idea of the suboptimal GeoD. Since $f$ is $\alpha$-strongly convex, the following inequality holds
\begin{equation}\label{sc-lower-bound}
f(x) + \inp{\nabla f(x)}{y-x} +\frac{\alpha}{2}\normtwo{y-x}^2 \leq f(y), \ \forall x,y\in\br^n.
\end{equation}
By letting $y=x^*$ in \eqref{sc-lower-bound}, one obtains that
\be\label{sc-lower-bound-ball}
x^*\in B\big(x^{++}, \normtwo{\nabla f(x)}^2/\alpha^2-2(f(x)-f^*)/\alpha\big), \forall x\in\br^n.
\ee
Note that the $\beta$-smoothness of $f$ implies
\be\label{beta-smooth-inequality}
f(x^+) \leq f(x) - \|\nabla f(x)\|^2/(2\beta), \forall x\in\br^n.
\ee
Combining \eqref{sc-lower-bound-ball} and \eqref{beta-smooth-inequality} yields
$
x^*\in B\big(x^{++}, (1-1/\kappa)\normtwo{\nabla f(x)}^2/\alpha^2-2(f(x^+)-f^*)/\alpha\big).
$
As a result, suppose that initially we have a ball $B(x_0,R_0^2)$ that contains $x^*$, then it follows that
\be\label{geoM-basic-two-intersection}
x^*\in B\big(x_0,R_0^2\big) \cap
  B\big(x_0^{++}, (1-1/\kappa)\normtwo{\nabla f(x_0)}^2/\alpha^2-2(f(x_0^+)-f^*)/\alpha\big).
\ee
Some simple algebraic calculations show that the squared radius of the minimum enclosing ball of the right hand side of \eqref{geoM-basic-two-intersection} is no larger than $R_0^2(1-1/\kappa)$, \textit{i.e.}, there exists some $x_1\in\br^n$ such that $x^*\in B\big(x_1,R_0^2(1-1/\kappa)\big)$. Therefore, the squared radius of the initial ball shrinks by a factor $(1-1/\kappa)$. Repeating this process yields a linear convergent sequence $\{x_k\}$ with the convergence rate $(1-1/\kappa)$:
$
\|x_k-x^*\|^2 \leq (1-1/\kappa)^k R_0^2.
$

The optimal GeoD (with the linear convergence rate $(1-1/\sqrt{\kappa})$) maintains two balls containing $x^*$ in each iteration, whose centers are $c_k$ and $x_{k+1}^{++}$, respectively. More specifically, suppose that in the $k$-th iteration we have $c_k$ and $x_k$, then $c_{k+1}$ and $x_{k+1}$ are obtained as follows. First, $x_{k+1}$ is the minimizer of $f$ on $\Line(c_k,x_k^+)$. Second, $c_{k+1}$ (resp. $R_{k+1}^2$) is the center (resp. squared radius) of the ball (given by Lemma \ref{lem:enclosing-ball}) that contains
\[
\begin{aligned}
B(c_k, R_k^2 - \|\nabla f(x_{k+1})\|^2/(\alpha^2\kappa)) \cap B(x_{k+1}^{++}, (1-1/\kappa)\|\nabla f(x_{k+1})\|^2/\alpha^2).
\end{aligned}
\]
Calculating $c_{k+1}$ and $R_{k+1}$ is easy and we refer to Algorithm 1 of \cite{bubeck2015geometric} for details. By applying Lemma \ref{lem:enclosing-ball} with $x_A = c_k$, $r_A = R_k$, $r_B = \|\nabla f(x_{k+1})\|/\alpha$, $\epsilon = 1/\kappa$ and $\delta=\frac{2}{\alpha}(f(x_k^+)-f(x^*))$, we obtain $R_{k+1}^2 = (1-1/\sqrt{\kappa})R_k^2$, which further implies
$\|x^*-c_k\|^2 \leq (1-1/\sqrt{\kappa})^k R_0^2,$
\textit{i.e.}, the optimal GeoD converges with the linear rate $(1-1/\sqrt{\kappa})$.

\begin{lemma}[see \cite{bubeck2015geometric,drusvyatskiy2016optimal}]\label{lem:enclosing-ball}
Fix centers $x_A$, $x_B\in \mathbb{R}^n$ and squared radii $r_A^2, r_B^2>0$. Also fix $\epsilon\in(0,1)$ and suppose $\normtwo{x_A-x_B}^2\geq r_B^2$. There exists a new center $c\in \mathbb{R}^n$ such that for any $\delta >0$, we have
\[
\begin{aligned}
  B(x_A ,r_A^2-\epsilon r_B^2-\delta)\cap B\big(x_B,r_B^2(1-\epsilon)-\delta\big)  \subset B\big(c,(1-\sqrt{\epsilon})r_A^2-\delta\big).
\end{aligned}
\]
\end{lemma}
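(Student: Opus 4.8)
The plan is to reduce the claim to a concrete computation about the intersection of two balls, exactly as in the ellipsoid-method heuristic underlying GeoD. The setup gives two balls $B(x_A, r_A^2 - \epsilon r_B^2 - \delta)$ and $B(x_B, r_B^2(1-\epsilon) - \delta)$; I want to exhibit a single center $c$ lying on the segment $\Line(x_A, x_B)$ whose enclosing ball, with squared radius $(1-\sqrt{\epsilon})r_A^2 - \delta$, contains the intersection. The natural parametrization is $c = x_A + \lambda(x_B - x_A)$ for some $\lambda \in [0,1]$ to be chosen, and I would carry the $-\delta$ term along additively throughout, since it shifts all three squared radii by the same amount and therefore should drop out of the geometric part of the argument.

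First I would set $d := \normtwo{x_A - x_B}$ and recall the hypothesis $d^2 \ge r_B^2$. For any point $x$ in the intersection, write $\normtwo{x - c}^2$ using the identity $\normtwo{x-c}^2 = \normtwo{x - x_A}^2 + 2\lambda \inp{x_A - x}{x_B - x_A} + \lambda^2 d^2$, and similarly expand $\normtwo{x - x_B}^2$; eliminating the cross term $\inp{x_A - x}{x_B - x_A}$ between the two expansions yields $\normtwo{x-c}^2$ as a convex combination $(1-\lambda)\normtwo{x - x_A}^2 + \lambda \normtwo{x - x_B}^2 - \lambda(1-\lambda) d^2$. Substituting the two membership bounds $\normtwo{x - x_A}^2 \le r_A^2 - \epsilon r_B^2 - \delta$ and $\normtwo{x - x_B}^2 \le r_B^2(1-\epsilon) - \delta$ gives
\[
\normtwo{x - c}^2 \le (1-\lambda)(r_A^2 - \epsilon r_B^2) + \lambda r_B^2(1-\epsilon) - \lambda(1-\lambda)d^2 - \delta.
\]
It then remains to choose $\lambda$ so that the $\delta$-free part is at most $(1-\sqrt{\epsilon}) r_A^2$.

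The core of the proof is thus the scalar optimization: minimize $g(\lambda) := (1-\lambda)(r_A^2 - \epsilon r_B^2) + \lambda(1-\epsilon)r_B^2 - \lambda(1-\lambda)d^2$ over $\lambda \in [0,1]$, or at least find one $\lambda$ making $g(\lambda) \le (1-\sqrt{\epsilon})r_A^2$. Since $g$ is concave in $\lambda$ (the $-\lambda(1-\lambda)d^2$ term is convex, but $g$'s second derivative is $2d^2>0$ — so actually $g$ is convex), I would just set $g'(\lambda)=0$, obtaining $\lambda^\star = \tfrac12 + \tfrac{\epsilon r_B^2 - \epsilon r_B^2}{2d^2}$ up to bookkeeping; carefully, $g'(\lambda) = -(r_A^2 - \epsilon r_B^2) + (1-\epsilon)r_B^2 - d^2 + 2\lambda d^2$, giving $\lambda^\star = \tfrac12 + \tfrac{r_A^2 - r_B^2}{2d^2}$ after the $\epsilon$-terms are collected. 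One must then check $\lambda^\star \in [0,1]$ — here the hypothesis $d^2 \ge r_B^2$ is exactly what guarantees the lower end (and $r_A^2 \le$ something analogous or the outer ball is trivial for the upper end; if $\lambda^\star$ falls outside, one clips to an endpoint and the bound is only easier). Plugging $\lambda^\star$ back in and simplifying, the $d^2$ in the denominator cancels against the numerators, and after using $\sqrt{\epsilon} \le \epsilon \cdot(\text{something})$ — more precisely, the bound reduces to verifying $(1-\lambda^\star)\epsilon r_B^2 + \lambda^\star \epsilon r_B^2 \le$ a multiple controlled by $\sqrt\epsilon r_A^2$, which follows from $r_B^2 \le d^2$ and AM–GM. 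I expect the genuinely fiddly step to be this final algebraic simplification — tracking how the $\epsilon r_B^2$ terms combine with $\lambda^\star(1-\lambda^\star)d^2$ to produce exactly $\sqrt{\epsilon}\, r_A^2$ rather than something merely comparable — and verifying the endpoint cases for $\lambda^\star$, rather than anything conceptually deep.
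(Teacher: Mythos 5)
The paper itself gives no proof of this lemma --- it is imported verbatim from \cite{bubeck2015geometric,drusvyatskiy2016optimal} --- so there is no in-paper argument to compare against; judged on its own, your proposal follows the standard route from those references and is essentially correct, though the two places you wave at do need to be written out. Your identity $\|x-c\|^2=(1-\lambda)\|x-x_A\|^2+\lambda\|x-x_B\|^2-\lambda(1-\lambda)d^2$ for $c=(1-\lambda)x_A+\lambda x_B$ is right, the $\delta$'s cancel because the coefficients sum to one, and $\lambda^\star=\tfrac12+\tfrac{r_A^2-r_B^2}{2d^2}$ is the correct minimizer of the (convex, as you caught mid-sentence) quadratic; the intermediate expression ``$\tfrac12+\tfrac{\epsilon r_B^2-\epsilon r_B^2}{2d^2}$'' is garbled but harmless. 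The ``fiddly algebra'' does close: writing $a=r_A^2$, $b=r_B^2$, $D=d^2$, one gets $g(\lambda^\star)=a-\epsilon b-\tfrac{(D+a-b)^2}{4D}$, so the claim reduces to $\tfrac{(D+a-b)^2}{4D}+\epsilon b\ge\sqrt{\epsilon}\,a$ for all $D\ge b$. Since $\tfrac{d}{dD}\tfrac{(D+a-b)^2}{4D}=\tfrac{D^2-(a-b)^2}{4D^2}$, the minimum of that term over $D\ge b$ is $\tfrac{a^2}{4b}$ (attained at $D=b$, when $a<2b$) or $a-b$ (attained at $D=a-b$, when $a\ge 2b$); in the first case AM--GM gives $\tfrac{a^2}{4b}+\epsilon b\ge\sqrt{\epsilon}\,a$ exactly, and in the second case $a-b\ge\sqrt{\epsilon}a-\epsilon b$ follows from $a\ge 2b\ge(1+\sqrt{\epsilon})b$ --- this is precisely where $d^2\ge r_B^2$ is consumed, as you anticipated. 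Your endpoint remark also needs one line of substance rather than ``the bound is only easier'': substituting the two radius bounds is legitimate only for $\lambda\in[0,1]$, and $\lambda^\star>1$ occurs exactly when $D<a-b$ (while $\lambda^\star\ge0$ is automatic from $D\ge b$); in that case take $c=x_B$ and check $b(1-\epsilon)\le(1-\sqrt{\epsilon})a$, which again follows because $a>D+b\ge 2b\ge(1+\sqrt{\epsilon})b$. With those two computations supplied, the argument is a complete and correct proof of the lemma.
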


\section{Geometric Descent Method for Convex Nonsmooth Composite Problems}\label{sec:GeoPG}
Drusvyatskiy, Fazel and Roy \cite{drusvyatskiy2016optimal} extended the suboptimal GeoD to solve the composite problem \eqref{composite}. However, it was not clear how to extend the optimal GeoD to solve problem \eqref{composite}. We resolve this problem in this section.
%

The following lemma is useful to our analysis. Its proof is in the appendix.

\begin{lemma}\label{lem1}
Given point $x\in\mathbb{R}^n$ and step size $t \in (0, 1/\beta]$, denote $x^+ = x - tG_t(x)$. The following inequality holds for any $y\in \br^n$: \begin{equation}\label{maineq}
F(y)\geq F(x^+)+\inp{G_t(x)}{y-x}+\frac{t}{2}\normtwo{G_t(x)}^2+\frac{\alpha}{2}\normtwo{y-x}^2.
\end{equation}
\end{lemma}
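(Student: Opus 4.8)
The plan is to combine two standard convexity inequalities: the $\alpha$-strong convexity of $f$ and the subgradient inequality for $h$ at the point $x^+$. First I would write down strong convexity of $f$ at the pair $(x,y)$, namely $f(y) \geq f(x) + \inp{\nabla f(x)}{y-x} + \frac{\alpha}{2}\normtwo{y-x}^2$. Next I would use the $\beta$-smoothness of $f$ to bound $f(x^+)$ from above in terms of $f(x)$, $\nabla f(x)$ and the displacement $x^+ - x = -tG_t(x)$: since $t \leq 1/\beta$, the descent lemma gives $f(x^+) \leq f(x) + \inp{\nabla f(x)}{x^+-x} + \frac{1}{2t}\normtwo{x^+-x}^2 = f(x) - t\inp{\nabla f(x)}{G_t(x)} + \frac{t}{2}\normtwo{G_t(x)}^2$. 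Subtracting this from the strong convexity bound will let me eliminate $f(x)$ in favor of $f(x^+)$, at the cost of introducing $\nabla f(x)$ terms.

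The key technical point is the optimality condition for the proximal step. By definition $x^+ = \Prox_{th}(x - t\nabla f(x))$, so the first-order condition gives $\frac{1}{t}\bigl((x - t\nabla f(x)) - x^+\bigr) \in \partial h(x^+)$, i.e. $G_t(x) - \nabla f(x) \in \partial h(x^+)$. Call this subgradient $\xi := G_t(x) - \nabla f(x)$. Then the subgradient inequality for $h$ at $x^+$ reads $h(y) \geq h(x^+) + \inp{\xi}{y - x^+}$ for all $y$. I would add this to the combination of the two $f$-inequalities obtained above. The $\nabla f(x)$ contributions should cancel: the $f$-side carries $\inp{\nabla f(x)}{y-x}$ and $-\inp{\nabla f(x)}{x^+-x}$ giving $\inp{\nabla f(x)}{y - x^+}$, while the $h$-side carries $-\inp{\nabla f(x)}{y - x^+}$ from within $\inp{\xi}{y-x^+}$, so they sum to zero. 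What survives from the $h$-side is $h(x^+) + \inp{G_t(x)}{y - x^+} = h(x^+) + \inp{G_t(x)}{y - x} + \inp{G_t(x)}{x - x^+} = h(x^+) + \inp{G_t(x)}{y-x} + t\normtwo{G_t(x)}^2$, using $x - x^+ = tG_t(x)$.

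Assembling everything, the left side becomes $f(y) + h(y) = F(y)$, and the right side becomes $f(x^+) + h(x^+) + \inp{G_t(x)}{y-x} + \frac{\alpha}{2}\normtwo{y-x}^2 + \bigl(t - \frac{t}{2}\bigr)\normtwo{G_t(x)}^2 = F(x^+) + \inp{G_t(x)}{y-x} + \frac{t}{2}\normtwo{G_t(x)}^2 + \frac{\alpha}{2}\normtwo{y-x}^2$, which is exactly \eqref{maineq}.

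I do not expect a serious obstacle here; the argument is a careful bookkeeping of three inequalities. The one place to be attentive is the sign and normalization in the proximal optimality condition — getting $G_t(x) - \nabla f(x) \in \partial h(x^+)$ right (rather than, say, a factor of $t$ off) is what makes the $\nabla f(x)$ terms cancel cleanly, and it also relies essentially on $t \leq 1/\beta$ so that the descent lemma can be invoked with constant $1/t \geq \beta$. Everything else is expanding inner products via $x - x^+ = tG_t(x)$ and collecting the $\normtwo{G_t(x)}^2$ coefficients.
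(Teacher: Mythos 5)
Your proposal is correct and follows essentially the same route as the paper's own proof: combine the descent lemma (valid since $t\le 1/\beta$), the $\alpha$-strong convexity of $f$, and the convexity of $h$ together with the proximal optimality condition $G_t(x)-\nabla f(x)\in\partial h(x^+)$, then cancel the $\nabla f(x)$ terms and collect the $\normtwo{G_t(x)}^2$ coefficients using $x-x^+=tG_t(x)$. The bookkeeping in your sketch checks out exactly.
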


\subsection{GeoPG Algorithm}\label{sec:GeoPG-Algorithm}
In this subsection, we describe our proposed geometric proximal gradient method (GeoPG) for solving \eqref{composite}.
Throughout Sections \ref{sec:GeoPG-Algorithm} and \ref{sec:GeoPG-convergence}, $t\in(0,1/\beta]$ is a fixed scalar.
The key observation for designing GeoPG is that in the $k$-th iteration one has to find $x_k$ that lies on $\Line(x_{k-1}^+,c_{k-1})$ such that the following two inequalities hold:
\begin{equation}\label{equ23}
\begin{aligned}
F(x_k^+)\leq F(x_{k-1}^+)-\frac{t}{2}\normtwo{G_{t}(x_k)}^2, \mbox{ and } \normtwo{x_k^{++}-c_{k-1}}^2\geq \frac{1}{\alpha^2}\normtwo{G_{t}(x_k)}^2.
\end{aligned}
\end{equation}
{Intuitively, the first inequality in \eqref{equ23} requires that there is a function value reduction on $x_k^+$ from $x_{k-1}^+$, and the second inequality requires that the centers of the two balls are far away from each other so that Lemma \ref{lem:enclosing-ball} can be applied.}

The following lemma gives a sufficient condition for \eqref{equ23}. Its proof is
in the appendix.
\begin{lemma}\label{lem:equ23-reduced}
\eqref{equ23} holds if $x_k$ satisfies
\be\label{equ23-reduced}
\langle x_k^+ - x_k, x_{k-1}^+-x_k \rangle \leq 0, \mbox{ and } \langle x_k^+ - x_k, x_k - c_{k-1}\rangle \geq 0.
\ee
\end{lemma}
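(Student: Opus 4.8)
The plan is to prove the two inequalities in \eqref{equ23} separately, after one common preparation: record the identities $x_k^+-x_k=-tG_t(x_k)$ and $x_k^{++}-x_k=-G_t(x_k)/\alpha$, which are immediate from the definitions of $x^+$ and $x^{++}$. Since $t>0$ and $\alpha>0$, these let me rewrite the two angle conditions in \eqref{equ23-reduced} as the sign statements $\inp{G_t(x_k)}{x_{k-1}^+-x_k}\ge 0$ and $\inp{G_t(x_k)}{x_k-c_{k-1}}\le 0$.

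For the function-value decrease, I would apply Lemma~\ref{lem1} with base point $x=x_k$ and test point $y=x_{k-1}^+$, which gives
\[
F(x_{k-1}^+)\ \ge\ F(x_k^+)+\inp{G_t(x_k)}{x_{k-1}^+-x_k}+\frac{t}{2}\normtwo{G_t(x_k)}^2+\frac{\alpha}{2}\normtwo{x_{k-1}^+-x_k}^2 .
\]
Rearranging this and then discarding the term $\frac{\alpha}{2}\normtwo{x_{k-1}^+-x_k}^2\ge 0$ together with the inner-product term (which is $\ge 0$ by the first sign statement above) yields exactly $F(x_k^+)\le F(x_{k-1}^+)-\frac{t}{2}\normtwo{G_t(x_k)}^2$, the first inequality in \eqref{equ23}.

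For the separation of the two centers, I would expand the square around $x_k$:
\[
\normtwo{x_k^{++}-c_{k-1}}^2=\normtwo{x_k^{++}-x_k}^2+2\inp{x_k^{++}-x_k}{x_k-c_{k-1}}+\normtwo{x_k-c_{k-1}}^2 .
\]
The first term equals $\normtwo{G_t(x_k)}^2/\alpha^2$; the cross term equals $-\tfrac1\alpha\inp{G_t(x_k)}{x_k-c_{k-1}}$, which is $\ge 0$ by the second sign statement above; and the last term is $\ge 0$ trivially. Summing gives $\normtwo{x_k^{++}-c_{k-1}}^2\ge\normtwo{G_t(x_k)}^2/\alpha^2$, the second inequality in \eqref{equ23}.

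I do not expect a genuine obstacle: the argument is short, and the only place needing care is the sign bookkeeping when passing between $G_t(x_k)$, $x_k^+-x_k$, and $x_k^{++}-x_k$, and making sure Lemma~\ref{lem1} is invoked at the current iterate $x_k$ so that $G_t(x_k)$, not $G_t(x_{k-1})$, appears in the bound. The geometric content is that placing $x_k$ on $\Line(x_{k-1}^+,c_{k-1})$ subject to \eqref{equ23-reduced} makes the proximal-gradient direction at $x_k$ point ``away from'' $x_{k-1}^+$ and ``toward'' $c_{k-1}$, which is precisely the sign pattern that makes both terms above land with the required sign.
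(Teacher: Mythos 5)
Your proof is correct and follows essentially the same route as the paper: apply Lemma~\ref{lem1} with $x=x_k$, $y=x_{k-1}^+$ and drop the two nonnegative terms for the first inequality, then expand $\normtwo{x_k^{++}-c_{k-1}}^2$ about $x_k$ and use the second sign condition for the second. The only blemish is a harmless factor-of-$2$ ambiguity when you evaluate the cross term ($2\inp{x_k^{++}-x_k}{x_k-c_{k-1}}=-\tfrac{2}{\alpha}\inp{G_t(x_k)}{x_k-c_{k-1}}$), which does not affect the sign or the conclusion.
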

Therefore, we only need to find $x_k$ such that \eqref{equ23-reduced} holds. To do so, we define the following functions for given $x$, $c$ ($x\neq c$) and $t\in(0,\beta]$:
\[
\begin{aligned}
\phi_{t,x,c}(z) = \langle z^+ - z, x-c \rangle, \forall z\in\br^n \mbox{ and } \bar\phi_{t,x,c}(s) = \phi_{t,x,c}\big(x + s(c-x)\big), \forall s\in\br.
\end{aligned}
\]
The functions $\phi_{t,x,c}(z)$ and $\bar\phi_{t,x,c}(s)$ have the following properties. Its proof can be found
in the appendix.
\begin{lemma}\label{phi-t-property}
(i) $\phi_{t,x,c}(z)$ is Lipschitz continuous. (ii) $\bar\phi_{t,x,c}(s)$ strictly monotonically increases.
\end{lemma}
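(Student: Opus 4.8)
The plan is to establish both parts by exploiting standard properties of the proximal gradient map $G_t$. For part (i), recall that the map $z \mapsto z^+ = \Prox_{th}(z - t\nabla f(z))$ is a composition of the nonexpansive operator $\Prox_{th}$ with the map $z \mapsto z - t\nabla f(z)$, which is Lipschitz because $\nabla f$ is $\beta$-Lipschitz and $t \le 1/\beta$. Hence $z \mapsto z^+$ is Lipschitz, so $z \mapsto z^+ - z$ is Lipschitz, and therefore $\phi_{t,x,c}(z) = \langle z^+ - z, x - c\rangle$ is Lipschitz as the inner product of a fixed vector $x-c$ with a Lipschitz vector-valued function. (Concretely, $|\phi_{t,x,c}(z_1) - \phi_{t,x,c}(z_2)| \le \|x-c\| \cdot \|(z_1^+ - z_1) - (z_2^+ - z_2)\| \le L\|x-c\|\,\|z_1 - z_2\|$ for the appropriate constant $L$.)

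For part (ii), write $z_s := x + s(c-x)$ and note $\bar\phi_{t,x,c}(s) = -t\langle G_t(z_s), x - c\rangle = t\langle G_t(z_s), c - x\rangle$, so it suffices to show that $s \mapsto \langle G_t(z_s), c-x\rangle$ is strictly increasing. Since $z_{s_2} - z_{s_1} = (s_2 - s_1)(c-x)$, this is equivalent to showing that for $s_1 < s_2$,
\[
\langle G_t(z_{s_2}) - G_t(z_{s_1}),\, z_{s_2} - z_{s_1}\rangle > 0,
\]
i.e., strict monotonicity of $G_t$ along the line. The natural tool is the known \emph{co-coercivity / strong-monotonicity} property of the proximal gradient operator: because $f$ is $\alpha$-strongly convex and $\beta$-smooth and $t \in (0, 1/\beta]$, one has a lower bound of the form $\langle G_t(u) - G_t(v), u - v\rangle \ge c_t \|u - v\|^2$ for some $c_t > 0$ (for instance $c_t = \alpha$ or $c_t = \alpha/(1+t\alpha)$, depending on the exact estimate used). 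I would derive this by combining the firm nonexpansiveness of $\Prox_{th}$ with strong monotonicity of $\mathrm{Id} - t\nabla f$; alternatively, one can add the inequality of Lemma \ref{lem1} (applied at $x = u$, $y = v$) to the same inequality with $u,v$ swapped, which yields $\langle G_t(u) - G_t(v), u-v\rangle \ge \alpha\|u-v\|^2 + \text{(nonnegative terms)}$. Applying this with $u = z_{s_2}$, $v = z_{s_1}$ gives $\langle G_t(z_{s_2}) - G_t(z_{s_1}), z_{s_2} - z_{s_1}\rangle \ge c_t (s_2-s_1)^2 \|c-x\|^2 > 0$ since $x \ne c$, which is exactly what part (ii) requires.

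The main obstacle is cleanly extracting the strict strong-monotonicity estimate for $G_t$ in the composite setting: Lemma \ref{lem1} is stated as a lower bound on $F(y)$ involving $F(x^+)$ rather than directly as a monotonicity inequality, so I must be careful when symmetrizing it (the $F(x^+)$ and $F(y^+)$ terms must cancel correctly after adding the two copies). Once that symmetrization is done the inequality is immediate, and the strictness comes for free from $\alpha > 0$ and $x \neq c$. Part (i) is routine; the only point requiring a word of care there is that the Lipschitz constant depends on $\|x-c\|$ (and on $t$), but this is fine since $x$, $c$, $t$ are fixed in the statement.
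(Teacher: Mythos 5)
Your proof follows essentially the same route as the paper's. Part (i) is identical: Cauchy--Schwarz plus nonexpansiveness of $\Prox_{th}$ and the Lipschitz continuity of $z\mapsto z-t\nabla f(z)$, yielding the constant $(2+t\beta)\normtwo{x-c}$. For part (ii) the paper also reduces the claim to strong monotonicity of $G_t$ along the segment, writing $\bar\phi_{t,x,c}(s_2)-\bar\phi_{t,x,c}(s_1)=\frac{t}{s_2-s_1}\langle G_t(z_2)-G_t(z_1),z_2-z_1\rangle$ exactly as you do; the only difference is that the paper simply cites a known result (Lemma 3.9 of Lee et al.) for $\langle G_t(u)-G_t(v),u-v\rangle\ge\frac{\alpha}{2}\normtwo{u-v}^2$ rather than proving it. Of your two proposed derivations of this bound, the firm-nonexpansiveness route goes through: $\|(u-t\nabla f(u))-(v-t\nabla f(v))\|\le\sqrt{1-\alpha t}\,\normtwo{u-v}$ for $t\le 1/\beta$, hence $\langle u^+-v^+,u-v\rangle\le\sqrt{1-\alpha t}\,\normtwo{u-v}^2$ and the monotonicity constant $(1-\sqrt{1-\alpha t})/t\ge\alpha/2$. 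The symmetrization of Lemma \ref{lem1}, however, does not work as you sketched it: adding \eqref{maineq} at $(x,y)=(u,v)$ and at $(v,u)$ leaves the residuals $F(u^+)-F(u)+\frac{t}{2}\normtwo{G_t(u)}^2$ and $F(v^+)-F(v)+\frac{t}{2}\normtwo{G_t(v)}^2$, and these are \emph{nonpositive} (take $y=x$ in \eqref{maineq}), not nonnegative, so they weaken rather than reinforce the bound; the variant with $y=v^+$ and $y=u^+$ fares no better because the cross term $\langle G_t(u),G_t(v)\rangle$ does not cancel favorably. You flagged precisely this step as the risk, and since part (ii) only needs \emph{some} positive monotonicity constant, your argument is correct once you commit to the firm-nonexpansiveness derivation (or, as the paper does, cite the result).
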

%

We are now ready to describe how to find $x_k$ such that \eqref{equ23-reduced} holds. This is summarized in Lemma \ref{lemcase}.
\begin{lemma}\label{lemcase}
The following two ways find $x_k$ satisfying \eqref{equ23-reduced}. \noindent
\begin{itemize}
\item[(i)] If $\bar\phi_{t,x_{k-1}^+,c_{k-1}}(1) \leq 0$, then \eqref{equ23-reduced} holds by setting $x_k:=c_{k-1}$; if $\bar\phi_{t,x_{k-1}^+,c_{k-1}}(0) \geq 0$, then \eqref{equ23-reduced} holds by setting $x_k:=x_{k-1}^+$; if $\bar\phi_{t,x_{k-1}^+,c_{k-1}}(1)> 0$ and $\bar\phi_{t,x_{k-1}^+,c_{k-1}}(0)<0$, then there exists $s\in [0,1]$ such that $\bar\phi_{t,x_{k-1}^+,c_{k-1}}(s) = 0$. As a result, \eqref{equ23-reduced} holds by setting $x_k:=x_{k-1}^+ + s(c_{k-1}-x_{k-1}^+)$.
\item[(ii)] If $\bar\phi_{t,x_{k-1}^+,c_{k-1}}(0) \geq 0$, then \eqref{equ23-reduced} holds by setting $x_k:=x_{k-1}^+$; if $\bar\phi_{t,x_{k-1}^+,c_{k-1}}(0)< 0$, then there exists $s\geq 0$ such that $\bar\phi_{t,x_{k-1}^+,c_{k-1}}(s) = 0$. As a result, \eqref{equ23-reduced} holds by setting $x_k:=x_{k-1}^+ + s(c_{k-1}-x_{k-1}^+)$.
\end{itemize}
\end{lemma}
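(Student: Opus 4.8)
After a one–dimensional reparametrization, both parts reduce to elementary sign statements about the scalar function $\bar\phi:=\bar\phi_{t,x_{k-1}^+,c_{k-1}}$, which is continuous (indeed Lipschitz, being $\phi_{t,x_{k-1}^+,c_{k-1}}$ composed with an affine map, cf.\ Lemma~\ref{phi-t-property}(i)) and strictly increasing (Lemma~\ref{phi-t-property}(ii)). The plan is: (1) express the two inner products in \eqref{equ23-reduced} in terms of $\bar\phi$; (2) for part~(i) read off the endpoint choices and apply the intermediate value theorem; (3) for part~(ii) additionally show $\bar\phi(s)\to+\infty$ as $s\to\infty$ using Lemma~\ref{lem1}.

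\emph{Reduction and part~(i).} Parametrize the points of $\Line(x_{k-1}^+,c_{k-1})$ by $z(s):=x_{k-1}^+ + s\,(c_{k-1}-x_{k-1}^+)$, $s\in\br$, and recall $z(s)^+-z(s)=-t\,G_t(z(s))$, so $\bar\phi(s)=\inp{z(s)^+-z(s)}{x_{k-1}^+-c_{k-1}}=t\,\inp{G_t(z(s))}{c_{k-1}-x_{k-1}^+}$. Since $x_{k-1}^+-z(s)=-s\,(c_{k-1}-x_{k-1}^+)$ and $z(s)-c_{k-1}=(s-1)(c_{k-1}-x_{k-1}^+)$, taking $x_k=z(s)$ gives
\[
\inp{x_k^+-x_k}{x_{k-1}^+-x_k}=s\,\bar\phi(s), \qquad \inp{x_k^+-x_k}{x_k-c_{k-1}}=(1-s)\,\bar\phi(s),
\]
hence \eqref{equ23-reduced} holds for $x_k=z(s)$ exactly when $s\,\bar\phi(s)\le0$ and $(1-s)\,\bar\phi(s)\ge0$. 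From this pair: $s=1$ (i.e.\ $x_k=c_{k-1}$) works whenever $\bar\phi(1)\le0$; $s=0$ (i.e.\ $x_k=x_{k-1}^+$) works whenever $\bar\phi(0)\ge0$; and any $s$ with $\bar\phi(s)=0$ works, wherever it lies. In the remaining case of part~(i), $\bar\phi(1)>0$ and $\bar\phi(0)<0$, so continuity of $\bar\phi$ and the intermediate value theorem on $[0,1]$ produce $s\in[0,1]$ with $\bar\phi(s)=0$. This proves part~(i), and the alternative $\bar\phi(0)\ge0$ of part~(ii) is the same $s=0$ case.

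\emph{Part~(ii), case $\bar\phi(0)<0$, and the main obstacle.} Here I would produce a nonnegative root of $\bar\phi$ by showing $\bar\phi(s)\to+\infty$ as $s\to\infty$ and then applying the intermediate value theorem on $[0,S]$ for $S$ large. For the growth, apply \eqref{maineq} of Lemma~\ref{lem1} with $x$ replaced by $z(s)$ and $y$ by $x_{k-1}^+$; using $x_{k-1}^+-z(s)=-s(c_{k-1}-x_{k-1}^+)$ and $\inp{G_t(z(s))}{c_{k-1}-x_{k-1}^+}=\bar\phi(s)/t$, the inequality rearranges to $\frac{s}{t}\,\bar\phi(s)\ge F(z(s)^+)-F(x_{k-1}^+)+\frac{t}{2}\normtwo{G_t(z(s))}^2+\frac{\alpha s^2}{2}\normtwo{c_{k-1}-x_{k-1}^+}^2$. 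Discarding the nonnegative proximal-gradient term, bounding $F(z(s)^+)\ge F^*:=\inf_x F(x)>-\infty$ (finite since $F=f+h$ is $\alpha$-strongly convex), and dividing by $s>0$ yields $\bar\phi(s)\ge t(F^*-F(x_{k-1}^+))/s+\tfrac{t\alpha s}{2}\normtwo{c_{k-1}-x_{k-1}^+}^2$, whose right-hand side tends to $+\infty$ since $c_{k-1}\neq x_{k-1}^+$. Thus $\bar\phi(S)>0$ for some $S>0$, and with $\bar\phi(0)<0$ and continuity we obtain $s\in(0,S)$ with $\bar\phi(s)=0$; strict monotonicity (Lemma~\ref{phi-t-property}(ii)) makes this root unique and hence computable by bisection. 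The reduction and part~(i) being routine, I expect the only substantive point to be exactly this limit $\bar\phi(s)\to+\infty$: without Lemma~\ref{lem1} it would require crude bounds such as $\normtwo{G_t(z(s))}=O(\normtwo{z(s)})$, but that lemma reduces it to the one-line estimate above, so I anticipate no real difficulty.
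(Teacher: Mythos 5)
Your proof is correct and follows the same basic route as the paper's: reduce \eqref{equ23-reduced} to sign conditions on the scalar function $\bar\phi:=\bar\phi_{t,x_{k-1}^+,c_{k-1}}$ and apply the intermediate value theorem, on $[0,1]$ for part (i) and on $[0,\infty)$ for part (ii). The one place you genuinely diverge is the justification that $\bar\phi$ eventually becomes positive in part (ii): the paper's proof merely cites the ``monotonicity and continuity'' of $\bar\phi$ from Lemma~\ref{phi-t-property}, which as literally stated is insufficient (a strictly increasing continuous function that is negative at $0$ need not have a root); what actually makes that argument work is the strong-monotonicity increment bound $\bar\phi(s_2)-\bar\phi(s_1)\ge\frac{\alpha t}{2}(s_2-s_1)\normtwo{x-c}^2$ established inside the appendix proof of Lemma~\ref{phi-t-property}(ii), which yields linear growth. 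Your alternative --- applying Lemma~\ref{lem1} with $x=z(s)$ and $y=x_{k-1}^+$ to obtain $\bar\phi(s)\ge t(F^*-F(x_{k-1}^+))/s+\tfrac{t\alpha s}{2}\normtwo{c_{k-1}-x_{k-1}^+}^2$ for $s>0$ --- is equally valid (it needs $F^*>-\infty$, which holds by strong convexity, and $c_{k-1}\neq x_{k-1}^+$, which the paper assumes in defining $\phi_{t,x,c}$) and has the virtue of making the required coercivity explicit rather than leaving it buried in another lemma's proof. Your identities $\inp{x_k^+-x_k}{x_{k-1}^+-x_k}=s\,\bar\phi(s)$ and $\inp{x_k^+-x_k}{x_k-c_{k-1}}=(1-s)\,\bar\phi(s)$ are also a useful addition: they exhibit the two endpoint cases and the root case as instances of a single equivalence, a reduction the paper leaves implicit.
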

\begin{proof}
Case (i) directly follows from the Mean-Value Theorem. Case (ii) follows from the monotonicity and continuity of $\bar\phi_{t,x_{k-1}^+,c_{k-1}}$ from Lemma \ref{phi-t-property}.
\end{proof}

It is indeed very easy to find $x_k$ satisfying the two cases in Lemma \ref{lemcase}. Specifically, for case (i) of Lemma \ref{lemcase}, we can use the bisection method to find the zero of $\bar\phi_{t,x_{k-1}^+,c_{k-1}}$ in the closed interval $[0,1]$. In practice, we found that the Brent-Dekker method \cite{brent2013algorithms,Dekker-1969} performs much better than the bisection method, so we use the Brent-Dekker method in our numerical experiments. For case (ii) of Lemma \ref{lemcase}, we can use the semi-smooth Newton method to find the zero of $\bar\phi_{t,x_{k-1}^+,c_{k-1}}$ in the interval $[0,+\infty)$. 
In our numerical experiments, we implemented the global semi-smooth Newton method \cite{Gerdts2017,hans2015global} and obtained very encouraging results.
These two procedures are described in Algorithms \ref{Alg:Brent} and \ref{Alg:semi-smooth-newton}, respectively. Based on the discussions above, we know that $x_k$ generated by these two algorithms satisfies \eqref{equ23-reduced} and hence \eqref{equ23}.

\begin{algorithm}[h]
\caption{The first procedure for finding $x_k$ from given $x_{k-1}^+$ and $c_{k-1}$.}\label{Alg:Brent}
\begin{algorithmic}[1]
    \IF{$\inp{(x_{k-1}^+)^+-x_{k-1}^+}{x_{k-1}^+-c_{k-1}}\geq 0$}
    \STATE{set $x_k:=x_{k-1}^+$;}
    \ELSIF{$\inp{c_{k-1}^+-c_{k-1}}{x_{k-1}^+-c_{k-1}}\leq 0$}
    \STATE{set $x_k:=c_{k-1}$;}
    \ELSE
    \STATE{use the Brent-Dekker method to find $s\in [0,1]$ such that $\bar\phi_{t,x_{k-1}^+,c_{k-1}}(s)=0$ and set $x_k:= x_{k-1}^+ + s(c_{k-1}-x_{k-1}^+)$;}
     \ENDIF
  \end{algorithmic}
\end{algorithm}

\begin{algorithm}[h]
\caption{The second procedure for finding $x_k$ from given $x_{k-1}^+$ and $c_{k-1}$.}\label{Alg:semi-smooth-newton}
\begin{algorithmic}[1]
    \IF{$\inp{(x_{k-1}^+)^+-x_{k-1}^+}{x_{k-1}^+-c_{k-1}}\geq 0$}
    \STATE{set $x_k:=x_{k-1}^+$;}
    \ELSE
    \STATE{use the global semi-smooth Newton method \cite{Gerdts2017,hans2015global} to find the root $s\in [0,+\infty)$ of $\bar\phi_{t,x_{k-1}^+,c_{k-1}}(s)$, and set $x_k:= x_{k-1}^+ +s(c_{k-1}-x_{k-1}^+)$;}
    \ENDIF
\end{algorithmic}
\end{algorithm}

We are now ready to present our GeoPG algorithm for solving \eqref{composite} as in Algorithm \ref{Alg:GeoPG}. 

\begin{algorithm}[h]
\caption{GeoPG: geometric proximal gradient descent for convex composite minimization.}\label{Alg:GeoPG}
\begin{algorithmic}[1]
    \REQUIRE{Parameters $\alpha$, $\beta$, initial point $x_0$ and step size $t\in(0,1/\beta]$.}
    \STATE{Set $c_0=x_0^{++}$, $R_0^2=\normtwo{G_{t}(x_0)}^2(1-\alpha t)/\alpha^2$;}
    \FOR{$k=1,2,\ldots$}
    \STATE{Use Algorithm \ref{Alg:Brent} or \ref{Alg:semi-smooth-newton} to find $x_k$;}
    \STATE{Set $x_A := x_k ^{++}=x_k-G_{t}(x_k)/\alpha$, and $R_A^2=\normtwo{G_{t}(x_k)}^2(1-\alpha t)/\alpha^2$;}
    \STATE{Set $x_B := c_{k-1}$, and $R_B^2=R_{k-1}^2- 2(F(x_{k-1}^+)-F(x_k^+))/\alpha$;}
    \STATE{Compute $B(c_k,R_k^2)$: the minimum enclosing ball of $B(x_A,R_A^2)\cap B(x_B,R_B^2)$, which can be done using Algorithm 1 in \cite{bubeck2015geometric};}
    \ENDFOR
\end{algorithmic}
\end{algorithm}

\subsection{Convergence Analysis of GeoPG}\label{sec:GeoPG-convergence}
We are now ready to present our main convergence result for GeoPG. 
\begin{theorem}\label{the1}
Given initial point $x_0$ and step size $t\in(0,1/\beta]$, we set $R_0^2=\frac{\normtwo{G_{t}(x_0)}^2}{\alpha^2}(1-\alpha t)$. Suppose that sequence $\{(x_k,c_k,R_k)\}$ is generated by Algorithm \ref{Alg:GeoPG}, and that $x^*$ is the optimal solution of \eqref{composite} and $F^*$ is the optimal objective value.
For any $k\geq 0$, one has $x^*\in B(c_k,R_k^2)$ and $R_{k+1}^2\leq (1-\sqrt{\alpha t})R_k^2$, and thus
\be\label{the1-inequality}
\begin{aligned}
\normtwo{x^*-c_k}^2\leq (1-\sqrt{\alpha t})^k R_0^2, \mbox{ and }
 F(x_{k+1}^+)-F^* \leq \frac{\alpha}{2}(1-\sqrt{\alpha t})^k R_0^2.
 \end{aligned}
\ee
Note that when $t=1/\beta$, \eqref{the1-inequality} implies the linear convergence rate $(1-1/\sqrt{\kappa})$.
\end{theorem}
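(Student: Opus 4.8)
The plan is to prove both conclusions by induction on $k$, maintaining the invariant that $x^* \in B(c_k, R_k^2)$ with $R_k^2 \le (1-\sqrt{\alpha t})^k R_0^2$. First I would establish the base case: applying Lemma~\ref{lem1} with $x = x_0$ and $y = x^*$, and using that $G_t(x_0)$ is the proximal gradient, one gets $F(x^*) \ge F(x_0^+) + \langle G_t(x_0), x^* - x_0\rangle + \tfrac{t}{2}\|G_t(x_0)\|^2 + \tfrac{\alpha}{2}\|x^* - x_0\|^2$. Rearranging the terms involving $x^* - x_0$ into a perfect square centered at $x_0 - G_t(x_0)/\alpha = x_0^{++} = c_0$ shows $\|x^* - c_0\|^2 \le \tfrac{1}{\alpha^2}\|G_t(x_0)\|^2 - \tfrac{2}{\alpha}(F(x_0^+) - F^*) \le \tfrac{1}{\alpha^2}\|G_t(x_0)\|^2(1 - \alpha t)$, where the last step uses $F(x_0^+) - F^* \ge 0$ together with the $\tfrac{t}{2}\|G_t(x_0)\|^2$ term; this is exactly $R_0^2$, so $x^* \in B(c_0, R_0^2)$.

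For the inductive step, assume $x^* \in B(c_{k-1}, R_{k-1}^2)$. The core is to show that $x^*$ lies in \emph{both} balls $B(x_A, R_A^2)$ and $B(x_B, R_B^2)$ defined in Algorithm~\ref{Alg:GeoPG}, and then invoke Lemma~\ref{lem:enclosing-ball}. Membership in $B(x_A, R_A^2)$ with $x_A = x_k^{++}$ follows exactly as in the base case, applying Lemma~\ref{lem1} at $x = x_k$: $\|x^* - x_k^{++}\|^2 \le \tfrac{1}{\alpha^2}\|G_t(x_k)\|^2 - \tfrac{2}{\alpha}(F(x_k^+) - F^*) \le \tfrac{1}{\alpha^2}\|G_t(x_k)\|^2(1-\alpha t) = R_A^2$. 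Membership in $B(x_B, R_B^2)$ with $x_B = c_{k-1}$ needs $\|x^* - c_{k-1}\|^2 \le R_{k-1}^2 - \tfrac{2}{\alpha}(F(x_{k-1}^+) - F(x_k^+))$; starting from the inductive hypothesis $\|x^* - c_{k-1}\|^2 \le R_{k-1}^2$ this is not immediate, so instead I would redo the Lemma~\ref{lem1} estimate at $x = x_k$ but keep the full square around $c_{k-1}$ — actually the cleanest route is: from $x^* \in B(c_{k-1}, R_{k-1}^2)$ we already know $\|x^*-c_{k-1}\|^2 \le R_{k-1}^2$, and we must sharpen it by $\tfrac{2}{\alpha}(F(x_{k-1}^+)-F(x_k^+))$. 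This sharpening is precisely where the first inequality of \eqref{equ23}, namely $F(x_k^+) \le F(x_{k-1}^+) - \tfrac{t}{2}\|G_t(x_k)\|^2$, must be combined with the geometry — one writes $\|x^*-c_{k-1}\|^2 = \|x^* - x_k^{++}\|^2 + 2\langle x^* - x_k^{++}, x_k^{++} - c_{k-1}\rangle + \|x_k^{++}-c_{k-1}\|^2$ and uses Lemma~\ref{lem1} on the first cross term plus the second inequality of \eqref{equ23} on the term $\|x_k^{++}-c_{k-1}\|^2 \ge \tfrac{1}{\alpha^2}\|G_t(x_k)\|^2$ to absorb things correctly.

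Once $x^*$ is shown to lie in $B(x_A, R_A^2) \cap B(x_B, R_B^2)$, I would check the hypothesis of Lemma~\ref{lem:enclosing-ball} — namely $\|x_A - x_B\|^2 = \|x_k^{++} - c_{k-1}\|^2 \ge \tfrac{1}{\alpha^2}\|G_t(x_k)\|^2 = r_B^2$ in the notation $r_B = \|G_t(x_k)\|/\alpha$, which is exactly the second inequality of \eqref{equ23} guaranteed by Lemma~\ref{lemcase}. Then apply Lemma~\ref{lem:enclosing-ball} with $x_A = x_k^{++}$, $r_A^2 = R_{k-1}^2$ (taking the larger radius as $r_A$), $\epsilon = \alpha t$, and $\delta = \tfrac{2}{\alpha}(F(x_{k-1}^+) - F^*)$; matching the shrunk radii $r_A^2 - \epsilon r_B^2 - \delta$ and $r_B^2(1-\epsilon) - \delta$ against $R_A^2$ and $R_B^2$ gives $R_k^2 = (1 - \sqrt{\alpha t})R_{k-1}^2 - \delta \le (1-\sqrt{\alpha t})R_{k-1}^2$, closing the induction and yielding $\|x^* - c_k\|^2 \le (1-\sqrt{\alpha t})^k R_0^2$. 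Finally, the function-value bound follows by applying Lemma~\ref{lem1} one more time at $x = x_k$ with $y = x_{k}^+$ itself (or directly from $F(x_{k+1}^+) - F^* \le \tfrac{\alpha}{2}\|x^* - x_{k+1}^{++}\|^2 \cdot(\text{something})$); more simply, the estimate $F(x_k^+) - F^* \le \tfrac{\alpha}{2}R_{k-1}^2$ drops out of the same squared-distance computation since $\delta \ge 0$ forces $\tfrac{2}{\alpha}(F(x_{k-1}^+) - F^*) \le R_{k-1}^2$ at each stage. The main obstacle I anticipate is the bookkeeping in the inductive step that transfers the containment from $B(c_{k-1}, R_{k-1}^2)$ to containment in $B(x_B, R_B^2)$ with the \emph{improved} radius — getting the cross terms and the two inequalities of \eqref{equ23} to line up so that the $\tfrac{2}{\alpha}(F(x_{k-1}^+) - F(x_k^+))$ subtraction is legitimate; everything else is a direct application of the already-stated lemmas.
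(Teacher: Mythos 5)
Your base case and overall architecture (two balls centered at $x_k^{++}$ and $c_{k-1}$, then Lemma \ref{lem:enclosing-ball}) match the paper, but there is a genuine gap exactly at the point you flag: with the invariant $x^*\in B(c_{k-1},R_{k-1}^2)$ you cannot obtain the \emph{sharpened} containment $x^*\in B(c_{k-1}, R_{k-1}^2-\tfrac{2}{\alpha}(F(x_{k-1}^+)-F(x_k^+)))$. The cross-term expansion you propose, $\|x^*-c_{k-1}\|^2=\|x^*-x_k^{++}\|^2+2\langle x^*-x_k^{++},x_k^{++}-c_{k-1}\rangle+\|x_k^{++}-c_{k-1}\|^2$, does not close this: none of the available lemmas controls the sign of the inner product, and the second inequality of \eqref{equ23} bounds $\|x_k^{++}-c_{k-1}\|^2$ from \emph{below}, which pushes the estimate in the wrong direction. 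The missing idea is to strengthen the induction hypothesis itself: the paper proves by induction that
\[
x^*\in B\Bigl(c_k,\; R_k^2-\tfrac{2}{\alpha}\bigl(F(x_k^+)-F^*\bigr)\Bigr),
\]
i.e., the ball around $c_k$ already carries a slack of $\tfrac{2}{\alpha}(F(x_k^+)-F^*)$ in its squared radius. With this stronger invariant, the first inequality of \eqref{equ23}, $F(x_{k+1}^+)\le F(x_{k-1+1}^+)-\tfrac{t}{2}\|G_t(x_{k+1})\|^2$ (written at the appropriate index), converts that slack directly into $R_k^2-\tfrac{t}{\alpha}\|G_t(x_{k+1})\|^2-\tfrac{2}{\alpha}(F(x_{k+1}^+)-F^*)$, which is exactly the form $r_A^2-\epsilon r_B^2-\delta$ needed for Lemma \ref{lem:enclosing-ball} with $\epsilon=\alpha t$, $r_B=\|G_t(x_{k+1})\|/\alpha$, $\delta=\tfrac{2}{\alpha}(F(x_{k+1}^+)-F^*)$. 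No geometric cross-term manipulation is needed. The strengthened invariant also makes your final function-value bound rigorous: nonemptiness of the ball in the sharpened containment forces $F(x_{k+1}^+)-F^*\le\tfrac{\alpha}{2}R_k^2$.

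Two smaller points. First, your labeling in the application of Lemma \ref{lem:enclosing-ball} is inconsistent: you set $x_A=x_k^{++}$ but $r_A^2=R_{k-1}^2$; the center $c_{k-1}$ must be paired with $r_A=R_{k-1}$ (so $x_A=c_{k-1}$, $x_B=x_k^{++}$, $r_B=\|G_t(x_k)\|/\alpha$), since the conclusion of the lemma shrinks $r_A^2$. Second, the lemma yields $R_k^2\le(1-\sqrt{\alpha t})R_{k-1}^2-\delta+\delta$-bookkeeping consistent with the strengthened invariant, not literally $R_k^2=(1-\sqrt{\alpha t})R_{k-1}^2-\delta$; the $\delta$ stays attached to the invariant rather than being discarded.
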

\begin{proof}
We prove a stronger result by induction that for every $k\geq 0$, one has
\be\label{the1:proof-induction}
x^*\in B(c_k, R_k^2-2(F(x_{k}^+)-F^*)/\alpha).
\ee
Let $y=x^*$ in \eqref{maineq} we have
$\normtwo{x^*-x^{++}}^2 \leq (1-\alpha t)\normtwo{G_{t}(x)^2}/\alpha^2-2(F(x^+)-F^*)/\alpha$, implying
\begin{equation}\label{eq1}
x^*\in B(x^{++}, \normtwo{G_{t}(x)}^2(1-\alpha t)/\alpha^2-2(F(x^+)-F^*)/\alpha).
\end{equation}
Setting $x=x_0$ in \eqref{eq1} shows that \eqref{the1:proof-induction} holds for $k=0$. We now assume that \eqref{the1:proof-induction} holds for some $k\geq 0$, and in the following we will prove that \eqref{the1:proof-induction} holds for $k+1$. Combining \eqref{the1:proof-induction} and the first inequality of \eqref{equ23} yields
\be\label{the1:proof-induction-1}
x^*\in B(c_k, R_k^2-t\normtwo{G_{t}(x_{k+1})}^2/\alpha-2(F(x_{k+1}^+)-F^*)/\alpha).
\ee
By setting $x=x_{k+1}$ in \eqref{eq1}, we get
{
\be\label{the1:proof-induction-2}
x^*\in B(x_{k+1}^{++}, \normtwo{G_{t}(x_{k+1})}^2(1-\alpha t)/\alpha^2-2(F(x_{k+1}^+)-F^*)/\alpha).
\ee
}
We now apply Lemma \ref{lem:enclosing-ball} to \eqref{the1:proof-induction-1} and \eqref{the1:proof-induction-2}. Specifically, we set $x_B=x_{k+1}^{++}$, $x_A=c_k$, $\epsilon=\alpha t$, $r_A=R_k$, $r_B=\normtwo{G_{t}(x_{k+1})}/\alpha$, $\delta=\frac{2}{\alpha}(F(x_k^+)-F^*)$, and note that $\|x_A-x_B\|^2\geq r_B^2$ because of the second inequality of \eqref{equ23}. Then Lemma \ref{lem:enclosing-ball} indicates that there exists $c_{k+1}$ such that
\be\label{the1:proof-3}
x^* \in B(c_{k+1}, (1-1/\sqrt{\kappa})R_k^2 - 2(F(x_{k+1}^+)-F^*)/\alpha),
\ee
\textit{i.e.}, \eqref{the1:proof-induction} holds for $k+1$ with $R_{k+1}^2\leq (1-\sqrt{\alpha t})R_k^2$.
Note that $c_{k+1}$ is the center of the minimum enclosing ball of the intersection of the two balls in \eqref{the1:proof-induction-1} and \eqref{the1:proof-induction-2}, and can be computed in the same way as Algorithm 1 of \cite{bubeck2015geometric}.
From \eqref{the1:proof-3} we obtain that $\|x^*-c_{k+1}\|^2\leq (1-\sqrt{\alpha t})R_k^2\leq(1-\sqrt{\alpha t})^{k+1}R_0^2$. Moreover, \eqref{the1:proof-induction-1} indicates that $F(x_{k+1}^+)-F^*\leq \frac{\alpha}{2}R_k^2\leq\frac{\alpha}{2}(1-\sqrt{\alpha t})^kR_0^2$.
\end{proof}

\section{Practical Issues}\label{sec:practical}

\subsection{GeoPG with Backtracking}
In practice, the Lipschitz constant $\beta$ may be unknown to us. In this subsection, we describe a backtracking strategy for GeoPG in which $\beta$ is not needed. From the $\beta$-smoothness of $f$, we have
\be\label{beta-smooth-inequality-prox}
f(x^+)\leq f(x)- t\inp{\nabla f(x)}{G_t(x)} +t\normtwo{G_t(x)}^2/2.
\ee
Note that the inequality \eqref{maineq} holds because of \eqref{beta-smooth-inequality-prox}, which holds when $t\in(0,1/\beta]$. If $\beta$ is unknown, we can perform backtracking on $t$ such that \eqref{beta-smooth-inequality-prox} holds, which is a common practice for proximal gradient method, \textit{e.g.}, \cite{Beck-Teboulle-2009,Goldfarb-Scheinberg-fastlinesearch2011,Nesterov-07}.
Note that the key step in our analysis of GeoPG is to guarantee that the two inequalities in \eqref{equ23} hold. According to Lemma \ref{lem:equ23-reduced}, the second inequality in \eqref{equ23} holds as long as we use Algorithm \ref{Alg:Brent} or Algorithm \ref{Alg:semi-smooth-newton} to find $x_k$, and it does not need the knowledge of $\beta$. However, the first inequality in \eqref{equ23} requires $t\leq 1/\beta$, because its proof in Lemma \ref{lem:equ23-reduced} needs \eqref{maineq}. Thus, we need to perform backtracking on $t$ until \eqref{beta-smooth-inequality-prox} is satisfied, and use the same $t$ to find $x_k$ by Algorithm \ref{Alg:Brent} or Algorithm \ref{Alg:semi-smooth-newton}. Our GeoPG with backtracking (GeoPG-B) is described in Algorithm \ref{Alg:backtracking}.
\begin{algorithm}[h]
\caption{GeoPG with Backtracking (GeoPG-B)}\label{Alg:backtracking}
\begin{algorithmic}
\REQUIRE{Parameters $\alpha$, $\gamma\in(0,1)$, $\eta\in (0,1)$, initial step size $t_0>0$ and initial point $x_0$.}
\STATE{Repeat $t_0:=\eta t_0$ until \eqref{beta-smooth-inequality-prox} holds for $t=t_0$;}
\STATE{Set $c_0=x_0^{++}$, $R_0^2=\displaystyle\frac{\normtwo{G_{t_0}(x_0)}^2}{\alpha^2}(1-\alpha t_0)$;}
\FOR{$k=1,2,\ldots$}
\IF{no backtracking was performed in the $(k-1)$-st iteration}
\STATE{Set $t_k:=t_{k-1}/\gamma$;}
\ELSE
\STATE{Set $t_k:=t_{k-1}$;}
\ENDIF
\STATE{Compute $x_k$ by Algorithm \ref{Alg:Brent} or Algorithm \ref{Alg:semi-smooth-newton} with $t=t_k$;}
\WHILE{$f(x_k^+) > f(x_k) - t_k\langle\nabla f(x_k), G_{t_k}(x_k)\rangle +\frac{t_k}{2} \normtwo{G_{t_k}(x_k)}^2$}
\STATE{Set $t_k:=\eta t_{k}$ \ (backtracking);}
\STATE{Compute $x_k$ by Algorithm \ref{Alg:Brent} or Algorithm \ref{Alg:semi-smooth-newton} with $t=t_k$;}
\ENDWHILE
\STATE{Set $x_A := x_k^{++} =x_k- G_{t_k}(x_k)/\alpha$, $R_A^2=\frac{\normtwo{G_{t_k}(x_k)}^2}{\alpha^2}(1-\alpha t_k)$;}
\STATE{Set $x_B := c_{k-1}$, $R_B^2=R_{k-1}^2-\frac{2}{\alpha}(F(x_{k-1}^+)-F(x_k^+))$;}
\STATE{Compute $B(c_k,R_k^2)$: the minimum enclosing ball of $B(x_A,R_A^2)\cap B(x_B,R_B^2)$;}
\ENDFOR
\end{algorithmic}
\end{algorithm}

Note that the sequence $\{t_k\}$ generated in Algorithm \ref{Alg:backtracking} is uniformly bounded away from $0$. This is because \eqref{beta-smooth-inequality-prox} always holds when $t_k\leq 1/\beta$. As a result, we know $t_k \geq t_{\min} := \min_{i=0,\ldots,k} t_i \geq \eta/\beta$. It is easy to see that in the $k$-th iteration of Algorithm \ref{Alg:backtracking}, $x^*$ is contained in two balls:
\[
\ba{ll}
x^*\in & B\big(c_{k-1}, R_{k-1}^2-t_k\normtwo{G_{t_k}(x_{k})}^2/\alpha-2(F(x_{k}^+)-F^*)/\alpha\big)\\
x^*\in & B\big(x_{k}^{++}, \normtwo{G_{t_k}(x_{k})}^2(1-\alpha t_k)/\alpha^2-2(F(x_{k}^+)-F^*)/\alpha\big).
\ea
\]
Therefore, we have the following convergence result for Algorithm \ref{Alg:backtracking}, whose proof is similar to that for Algorithm \ref{Alg:GeoPG}. We thus omit the proof for succinctness.
\begin{theorem}
Suppose $\{(x_k,c_k,R_k,t_k)\}$ is generated by Algorithm \ref{Alg:backtracking}. For any $k\geq 0$, one has $x^*\in B(c_k,R_k^2)$ and $R_{k+1}^2\leq (1-\sqrt{\alpha t_k})R_k^2$, and thus
$\normtwo{x^*-c_k}^2\leq \prod_{i=0}^{k}(1-\sqrt{\alpha t_i})^i   R_0^2\leq (1-\sqrt{\alpha t_{min}})^k   R_0^2.$
\end{theorem}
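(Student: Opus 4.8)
The plan is to repeat, almost verbatim, the induction in the proof of Theorem~\ref{the1}, with the fixed step $t$ replaced throughout by the iteration-dependent $t_k$ returned by the \textbf{while}-loop of Algorithm~\ref{Alg:backtracking}. Concretely, I would prove by induction on $k\ge 0$ the stronger statement
\[
x^*\in B\!\left(c_k,\ R_k^2-\tfrac{2}{\alpha}\big(F(x_k^+)-F^*\big)\right),
\]
which is the exact analogue of \eqref{the1:proof-induction}; it yields at once $x^*\in B(c_k,R_k^2)$ and $F(x_k^+)-F^*\le\frac{\alpha}{2}R_k^2$, and the enclosing-ball step delivers $R_{k+1}^2\le(1-\sqrt{\alpha t_k})R_k^2$. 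The base case is handled exactly as in Theorem~\ref{the1}: the initial backtracking makes \eqref{beta-smooth-inequality-prox} hold for $t=t_0$, so Lemma~\ref{lem1} is valid with $t=t_0$, and putting $y=x^*$ in \eqref{maineq} together with $c_0=x_0^{++}$, $R_0^2=\normtwo{G_{t_0}(x_0)}^2(1-\alpha t_0)/\alpha^2$ gives the claim for $k=0$.

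For the inductive step, the two facts that make everything go through are: (a) after the \textbf{while} loop, \eqref{beta-smooth-inequality-prox} holds for the accepted $t_k$, hence Lemma~\ref{lem1} holds with step $t_k$, and in particular the first inequality of \eqref{equ23} --- the decrease $F(x_k^+)\le F(x_{k-1}^+)-\tfrac{t_k}{2}\normtwo{G_{t_k}(x_k)}^2$ --- is valid for this $t_k$; and (b) $x_k$ is produced by Algorithm~\ref{Alg:Brent} or Algorithm~\ref{Alg:semi-smooth-newton} with the \emph{same} $t_k$, so Lemma~\ref{lem:equ23-reduced} gives the second inequality of \eqref{equ23}, i.e. $\normtwo{x_k^{++}-c_{k-1}}^2\ge\normtwo{G_{t_k}(x_k)}^2/\alpha^2$. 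Combining the inductive hypothesis at $k-1$ with (a) puts $x^*$ in $B\big(c_{k-1},\,R_{k-1}^2-t_k\normtwo{G_{t_k}(x_k)}^2/\alpha-\tfrac{2}{\alpha}(F(x_k^+)-F^*)\big)$, and Lemma~\ref{lem1} at $x=x_k$ puts $x^*$ in $B\big(x_k^{++},\,\normtwo{G_{t_k}(x_k)}^2(1-\alpha t_k)/\alpha^2-\tfrac{2}{\alpha}(F(x_k^+)-F^*)\big)$; these are precisely the two balls displayed just before the theorem. Now invoke Lemma~\ref{lem:enclosing-ball} with $x_A=c_{k-1}$, $r_A=R_{k-1}$, $x_B=x_k^{++}$, $r_B=\normtwo{G_{t_k}(x_k)}/\alpha$, $\epsilon=\alpha t_k$, $\delta=\tfrac{2}{\alpha}(F(x_k^+)-F^*)$; its hypothesis $\normtwo{x_A-x_B}^2\ge r_B^2$ is exactly (b), and it produces $c_k$ with $x^*\in B\big(c_k,(1-\sqrt{\alpha t_k})R_{k-1}^2-\tfrac{2}{\alpha}(F(x_k^+)-F^*)\big)$, closing the induction with $R_k^2\le(1-\sqrt{\alpha t_k})R_{k-1}^2$.

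Unrolling the recursion and using $t_i\ge t_{\min}\ge\eta/\beta$ (established in the text right before the theorem) gives $\normtwo{x^*-c_k}^2\le R_k^2\le\prod_{i=0}^{k-1}(1-\sqrt{\alpha t_i})\,R_0^2\le(1-\sqrt{\alpha t_{\min}})^k R_0^2$, since each of the $k$ factors is at most $1-\sqrt{\alpha t_{\min}}$. The only point genuinely different from the proof of Theorem~\ref{the1}, and the place where I would be most careful, is the bookkeeping in (a): one must check that Lemma~\ref{lem1} rests only on \eqref{beta-smooth-inequality-prox} and not on the a priori bound $t\le 1/\beta$ (so that it still applies to the backtracked $t_k$), and that the value of $t_k$ used in the \textbf{while}-loop test is identical to the one fed to Algorithm~\ref{Alg:Brent}/\ref{Alg:semi-smooth-newton} when $x_k$ is recomputed after a backtracking step. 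One should also keep $\alpha t_k\in(0,1)$, which is what Lemma~\ref{lem:enclosing-ball} and the nonnegativity of $R_A^2$ require; this is compatible with, and does not interfere with, the lower bound $t_k\ge\eta/\beta$ that drives the rate. Apart from these remarks the argument is a verbatim copy of the proof for Algorithm~\ref{Alg:GeoPG}.
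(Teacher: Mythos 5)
Your proof is correct and is exactly the argument the paper intends: the paper omits the proof of this theorem, saying only that it is ``similar to that for Algorithm \ref{Alg:GeoPG},'' and your adaptation of the induction from Theorem \ref{the1} --- replacing the fixed $t$ by the accepted $t_k$, noting that Lemma \ref{lem1} needs only the backtracking exit condition \eqref{beta-smooth-inequality-prox} rather than the a priori bound $t\le 1/\beta$, and invoking Lemma \ref{lem:enclosing-ball} with $\epsilon=\alpha t_k$ on the two displayed balls --- is precisely the intended filling-in. (The exponent $i$ in the paper's product $\prod_{i=0}^{k}(1-\sqrt{\alpha t_i})^{i}$ is evidently a typo; your unrolled bound $\prod_i(1-\sqrt{\alpha t_i})\,R_0^2\le(1-\sqrt{\alpha t_{\min}})^{k}R_0^2$ is the correct reading.)
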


\subsection{GeoPG with Limited Memory}

The basic idea of GeoD is that in each iteration we maintain two balls $B(y_1,r_1^2)$ and $B(y_2, r_2^2)$ that both contain $x^*$, and then compute the minimum enclosing ball of their intersection, which is expected to be smaller than both $B(y_1,r_1^2)$ and $B(y_2, r_2^2)$. One very intuitive idea that can possibly improve the performance of GeoD is to maintain more balls from the past, because their intersection should be smaller than the intersection of two balls. This idea has been proposed by \cite{bubeck2016black} and \cite{drusvyatskiy2016optimal}. Specifically, \cite{bubeck2016black} suggested to keep all the balls from past iterations and then compute the minimum enclosing ball of their intersection. For a given bounded set $Q$, the center of its minimum enclosing ball is known as the Chebyshev center, and is defined as the solution to the following problem:
\begin{equation}\label{eqch}
\min_y \max_{x\in Q} \normtwo{y-x}^2 = \min_y \max_{x\in Q} { \normtwo{y}^2-2y^\top x+\Tr(xx^\top)}.
\end{equation}
\eqref{eqch} is not easy to solve for a general set $Q$. However, when $Q := \cap_{i=1}^m B(y_i,r_i^2)$, Beck \cite{Beck-enclosing-ball-2007} proved that the relaxed Chebyshev center (RCC) \cite{eldar2008minimax}, which is a convex quadratic program, is equivalent to \eqref{eqch}, if $m < n$. Therefore, we can solve \eqref{eqch} by solving a convex quadratic program (RCC):
\begin{equation}\label{eqrelch}
\min_y \max_{(x,\bigtriangleup)\in \Gamma}\normtwo{y}^2-2y^\top x+\Tr(\bigtriangleup) = \max_{(x,\bigtriangleup)\in \Gamma}\min_y \normtwo{y}^2-2y^\top x+\Tr(\bigtriangleup) = \max_{(x,\bigtriangleup)\in \Gamma} -\normtwo{x}^2+\Tr(\bigtriangleup),
\end{equation}
where $\Gamma=\{(x,\bigtriangleup): x\in Q ,\bigtriangleup\succeq x x^\top\}$. If $Q=\cap_{i=1}^m B(c_i,r_i^2)$, then the dual of \eqref{eqrelch} is
\begin{equation}\label{dual}
\min \normtwo{C\lambda}^2-\sum_{i=1}^{m}\lambda_i\normtwo{c_i}^2+\sum_{i=1}^{m}\lambda_ir_i^2, \ \st \ \sum_{i=1}^{m}\lambda_i = 1, \quad \lambda_i\geq 0, \; i = 1, \ldots, m,
\end{equation}
where $C=[c_1,\ldots,c_m]$ and $\lambda_i, i=1,2,\ldots,m$ are the dual variables. Beck \cite{Beck-enclosing-ball-2007} proved that the optimal solutions of \eqref{eqch} and \eqref{dual} are linked by $x^*=C\lambda^*$ if $m<n$.

Now we can give our limited-memory GeoPG algorithm (L-GeoPG) as in Algorithm \ref{Alg:L-GeoPG}. 
\begin{algorithm}[h]
\caption{L-GeoPG: Limited-memory GeoPG}\label{Alg:L-GeoPG}
\begin{algorithmic}[1]
    \REQUIRE{Parameters $\alpha$, $\beta$, memory size $m>0$ and initial point $x_0$.}
    \STATE{Set $c_0=x_0^{++}$, $r_0^2=R_0^2= \normtwo{G_t(x_0)}^2 (1-1/\kappa)/\alpha^2$, and $t=1/\beta$;}
    \FOR{$k=1,2,\ldots$}
    \STATE{Use Algorithm \ref{Alg:Brent} or \ref{Alg:semi-smooth-newton} to find $x_k$;}
    \STATE{Compute $r_k^2 = \normtwo{G_t(x_k)}^2(1-1/\kappa)/\alpha^2$;}
    \STATE{Compute $B(c_k,R_k^2)$: an enclosing ball of the intersection of $B(c_{k-1},R_{k-1}^2)$ and $Q_k:=\cap_{i=k-m+1}^k B(x_i^{++},r_i^2)$ (if $k\leq m$, then set $Q_k:=\cap_{i=1}^k B(x_i^{++},r_i^2)$). This is done by setting $c_k=C\lambda^*$, where $\lambda^*$ is the optimal solution of \eqref{dual};}
    \ENDFOR
\end{algorithmic}
\end{algorithm}

\begin{remark}
Backtracking can also be incorporated into L-GeoPG. We denote the resulting algorithm as L-GeoPG-B.
\end{remark}

L-GeoPG has the same linear convergence rate as GeoPG, as we show in Theorem \ref{thm:L-GeoPG}.
\begin{theorem}\label{thm:L-GeoPG}
Consider L-GeoPG algorithm. For any $k\geq 0$, one has $x^*\in B(c_k,R_k^2)$ and $R_{k}^2\leq (1-1/\sqrt{\kappa})R_{k-1}^2$, and thus
$\normtwo{x^*-c_k}^2\leq (1-1/\sqrt{\kappa})^k R_0^2.$
\end{theorem}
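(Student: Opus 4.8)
The plan is to re-run the induction behind the proof of Theorem~\ref{the1}, adding one observation that handles the limited-memory feature: intersecting with extra balls can only shrink the feasible region, and hence its minimum enclosing ball, so an L-GeoPG step is never worse than the corresponding GeoPG step. As in Theorem~\ref{the1} I would carry the stronger invariant $x^*\in B(c_k,R_k^2-2(F(x_k^+)-F^*)/\alpha)$, since it is the function-value slack that makes Lemma~\ref{lem:enclosing-ball} applicable; the base case is immediate from \eqref{eq1} with $x=x_0$ and $t=1/\beta$, which gives $x^*\in B(x_0^{++},R_0^2-2(F(x_0^+)-F^*)/\alpha)=B(c_0,R_0^2-2(F(x_0^+)-F^*)/\alpha)$.

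For the inductive step, $x_k$ is produced by Algorithm~\ref{Alg:Brent} or~\ref{Alg:semi-smooth-newton}, so \eqref{equ23} holds. Combining the first inequality of \eqref{equ23} with the inductive hypothesis at $k-1$ shows, exactly as in \eqref{the1:proof-induction-1}, that $x^*$ lies in a suitably shrunk ball $\hat B_1$ around $c_{k-1}$; and applying \eqref{eq1} with $x=x_i$ for each $i$ in the memory window shows $x^*\in B(x_i^{++},r_i^2-2(F(x_i^+)-F^*)/\alpha)\subseteq B(x_i^{++},r_i^2)$, hence $x^*\in Q_k$ and in particular $x^*\in\hat B_2:=B(x_k^{++},r_k^2-2(F(x_k^+)-F^*)/\alpha)$. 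Thus $x^*$ lies in the set whose minimum enclosing ball is $B(c_k,R_k^2)$ (the ball around $c_{k-1}$ being intersected is the function-value-corrected one, exactly as in Algorithm~\ref{Alg:GeoPG}), which gives the containment $x^*\in B(c_k,R_k^2)$. For the radius, the index $i=k$ always lies in the window, so $Q_k\subseteq B(x_k^{++},r_k^2)$ and the limited-memory intersection is contained in $\hat B_1\cap\hat B_2$. Applying Lemma~\ref{lem:enclosing-ball} to $\hat B_1,\hat B_2$ with $x_A=c_{k-1}$, $x_B=x_k^{++}$, $r_A=R_{k-1}$, $r_B=\normtwo{G_t(x_k)}/\alpha$, $\epsilon=\alpha t$ and $\delta=2(F(x_k^+)-F^*)/\alpha$ --- its hypothesis $\normtwo{x_A-x_B}^2\ge r_B^2$ being the second inequality of \eqref{equ23} --- bounds the minimum enclosing ball of $\hat B_1\cap\hat B_2$, and monotonicity of the Chebyshev radius under inclusion then yields $R_k^2\le(1-\sqrt{\alpha t})R_{k-1}^2=(1-1/\sqrt\kappa)R_{k-1}^2$ (recall $t=1/\beta$). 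Iterating, $\normtwo{x^*-c_k}^2\le R_k^2\le(1-1/\sqrt\kappa)^kR_0^2$.

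The main obstacle is the radius step. First, one must check that the quantity $R_k^2$ returned by the relaxed-Chebyshev-center program \eqref{dual} is genuinely the squared Chebyshev radius of the limited-memory intersection, not merely that of some looser enclosing ball; this is where Beck's equivalence \eqref{eqrelch}--\eqref{dual} enters, and it needs the number of balls kept in memory to be smaller than $n$. Second --- the genuinely delicate bookkeeping --- one must carry the function-value slack through the induction so that the ball intersected with $Q_k$ is the corrected $B(c_{k-1},R_{k-1}^2-2(F(x_{k-1}^+)-F(x_k^+))/\alpha)$ (computable, and contained in a ball to which Lemma~\ref{lem:enclosing-ball} applies with $\delta=0$) rather than the nominal $B(c_{k-1},R_{k-1}^2)$: without that correction the bare two-ball minimum enclosing ball need not contract by $(1-1/\sqrt\kappa)$, so this is essential rather than cosmetic. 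Everything else transcribes the proof of Theorem~\ref{the1} line by line.
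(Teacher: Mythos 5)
Your proposal is correct and follows essentially the same route as the paper's own proof: observe that $k$ always lies in the memory window so $Q_k\subseteq B(x_k^{++},r_k^2)$, hence the limited-memory intersection sits inside the two-ball intersection, invoke monotonicity of the minimum-enclosing-ball radius under inclusion, and reduce to the induction of Theorem~\ref{the1}. The extra bookkeeping you flag (carrying the function-value slack so that Lemma~\ref{lem:enclosing-ball} is applied to the corrected ball around $c_{k-1}$ rather than the nominal $B(c_{k-1},R_{k-1}^2)$) is a legitimate detail that the paper's one-line proof leaves implicit by deferring to the proof of Theorem~\ref{the1}, so it is a welcome clarification rather than a departure.
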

\begin{proof}
Note that $Q_k:=\cap_{i=k-m+1}^k B(x_i^{++},r_i^2)\subset B(x_{k}^{++},r_k^2)$. Thus, the minimum enclosing ball of $B(c_{k-1},R_{k-1}^2)\cap B(x_{k}^{++},r_k^2)$ is an enclosing ball of $B(c_{k-1},R_{k-1}^2)\cap Q_k$. The proof then follows from the proof of Theorem \ref{the1}, and we omit it for brevity.
\end{proof}

\section{Numerical Experiments}\label{sec:num}
In this section, we compare our GeoPG algorithm with Nesterov's accelerated proximal gradient (APG) method for solving two nonsmooth problems: linear regression and logistic regression, both with elastic net regularization. Because of the elastic net term, the strong convexity parameter $\alpha$ is known. However, we assume that $\beta$ is unknown, and implement backtracking for both GeoPG and APG, \textit{i.e.}, we test GeoPG-B and APG-B (APG with backtracking). We do not target at comparing with other efficient algorithms for solving these two problems. Our main purpose here is to illustrate the performance of this new first-order method GeoPG. Further improvement of this algorithm and comparison with other state-of-the-art methods will be a future research topic.

The initial points were set to zero. To obtain the optimal objective function value $F^*$, we ran APG-B and GeoPG-B for a sufficiently long time and the smaller function value returned by the two algorithms is selected as $F^*$. APG-B was terminated if $(F(x_k)-F^*)/F^*\leq tol$, and GeoPG-B was terminated if $(F(x_k^+)-F^*)/F^*\leq tol$, where $tol=10^{-8}$ is the accuracy tolerance. The parameters used in backtracking were set to $\eta=0.5$ and $\gamma=0.9$. In GeoPG-B, we used Algorithm \ref{Alg:semi-smooth-newton} to find $x_k$, because we found that the performance of Algorithm \ref{Alg:semi-smooth-newton} is slightly better than Algorithm \ref{Alg:Brent} in practice.
The codes were written in Matlab and run on a standard PC with 3.20 GHz I5 Intel microprocessor and 16GB of memory. In all figures we reported, the $x$-axis denotes the CPU time (in seconds) and $y$-axis denotes $(F(x_k^+)-F^*)/F^*$.

\subsection{Linear regression with elastic net regularization}
In this subsection, we compare GeoPG-B and APG-B for solving linear regression with elastic net regularization, a popular problem in  machine learning and statistics \cite{Zou-Hastie-elastic-net-2005}:
\be\label{lasso-en}
\min_{x\in\br^n} \ \frac{1}{2p} \normtwo{Ax-b}^2 +\frac{\alpha}{2} \normtwo{x}^2 + \mu \normone{x},
\ee
where $A\in \br^{p\times n}$, $b\in\br^p$, $\alpha,\mu>0$ are the weighting parameters.

We conducted tests on two real datasets downloaded from the LIBSVM repository: a9a, RCV1.
The results are reported in Figure \ref{figure_leq}. In particular, we tested $\alpha=10^{-8} $ and $\mu=10^{-3},10^{-4},10^{-5}$. Note that since $\alpha$ is very small, the problems are very likely to be ill-conditioned. We see from Figure \ref{figure_leq} that GeoPG-B is faster than APG-B on these real datasets, which indicates that GeoPG-B is preferable than APG-B. In the appendix, we show more numerical results on different $\alpha$, which further confirm that GeoPG-B is faster than APG-B when the problems are more ill-conditioned.

\begin{figure}[ht]
\begin{center}
\minipage{0.5\textwidth}
 \subfigure[Dataset a9a ]{
\centerline{\includegraphics[width=0.8\linewidth]{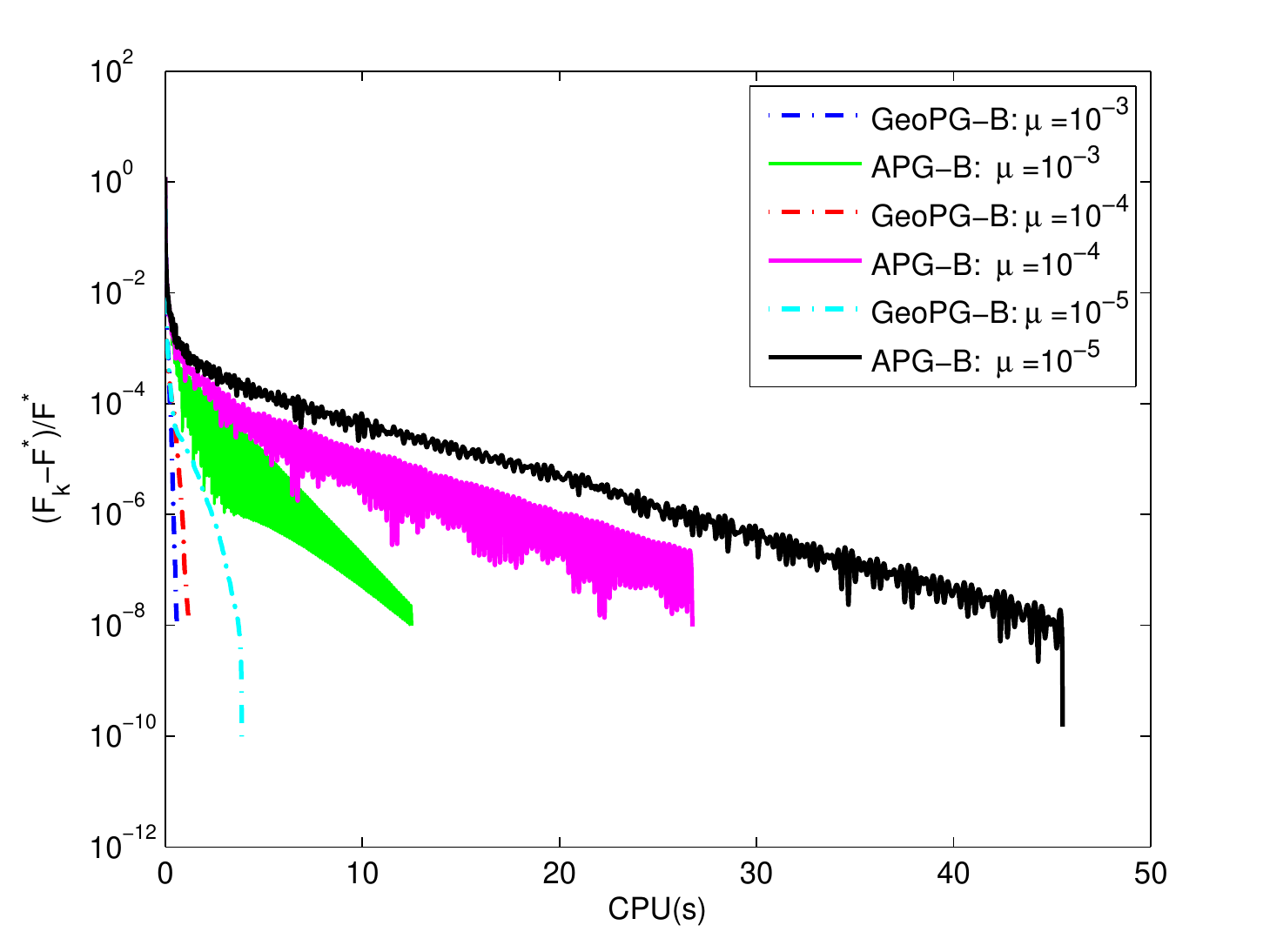}}}
\endminipage\hfill
 \minipage{0.5\textwidth}
  \subfigure[Dataset RCV1 ]{\includegraphics[width=0.8\linewidth]{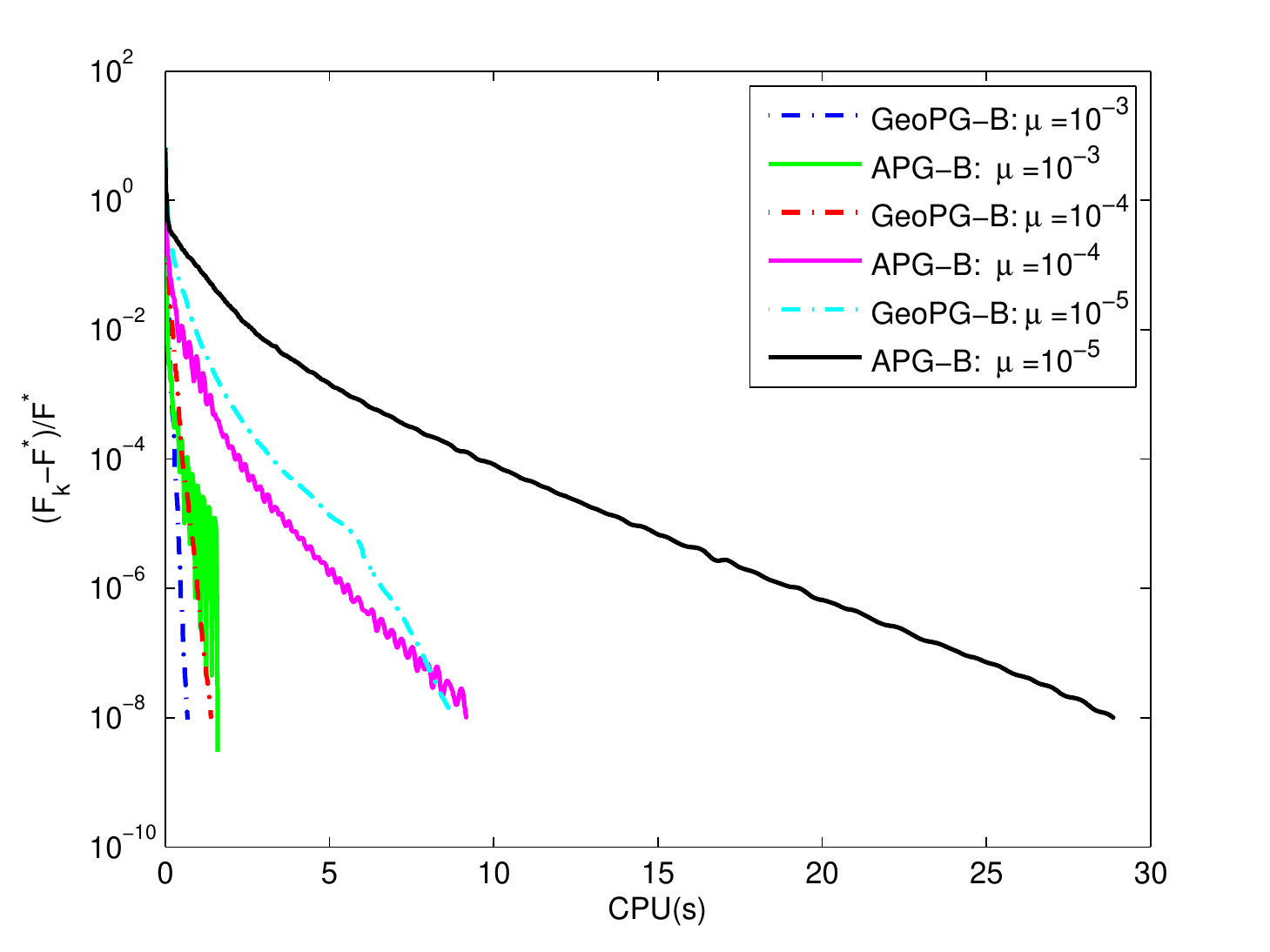}}
  \endminipage\hfill
\caption{GeoPG-B and APG-B for solving \eqref{lasso-en} with $\alpha=10^{-8}$.}
\label{figure_leq}
\end{center}
\end{figure}

\subsection{Logistic regression with elastic net regularization}

In this subsection, we compare the performance of GeoPG-B and APG-B for solving the following logistic regression problem with elastic net regularization:
\be\label{LR}
\min_{x\in\br^n} \ \frac{1}{p} \sum_{i=1}^{p} \log \big(1+\exp(-b_i \cdot a_i^\top x)\big) +\frac{\alpha}{2} \normtwo{x}^2 +\mu\normone{x},
\ee
where $a_i \in \br^n$ and $b_i \in \{\pm 1\}$ are the feature vector and class label of the $i$-th sample, respectively,
and $\alpha$, $\mu > 0$ are the weighting parameters.

We tested GeoPG-B and APG-B for solving \eqref{LR} on the three real datasets a9a, RCV1 and Gisette from LIBSVM, and the results are reported in Figure \ref{figure_log}. In particular, we tested $\alpha=10^{-8}$ and $\mu=10^{-3},10^{-4},10^{-5}$. Figure \ref{figure_log} shows that for the same $\mu$, GeoPG-B is much faster than APG-B. More numerical results are provided in the appendix, which also indicate that GeoPG-B is much faster than APG-B, especially when the problems are more ill-conditioned.

\begin{figure}[ht]
\begin{center}
\includegraphics[width=0.32\linewidth]{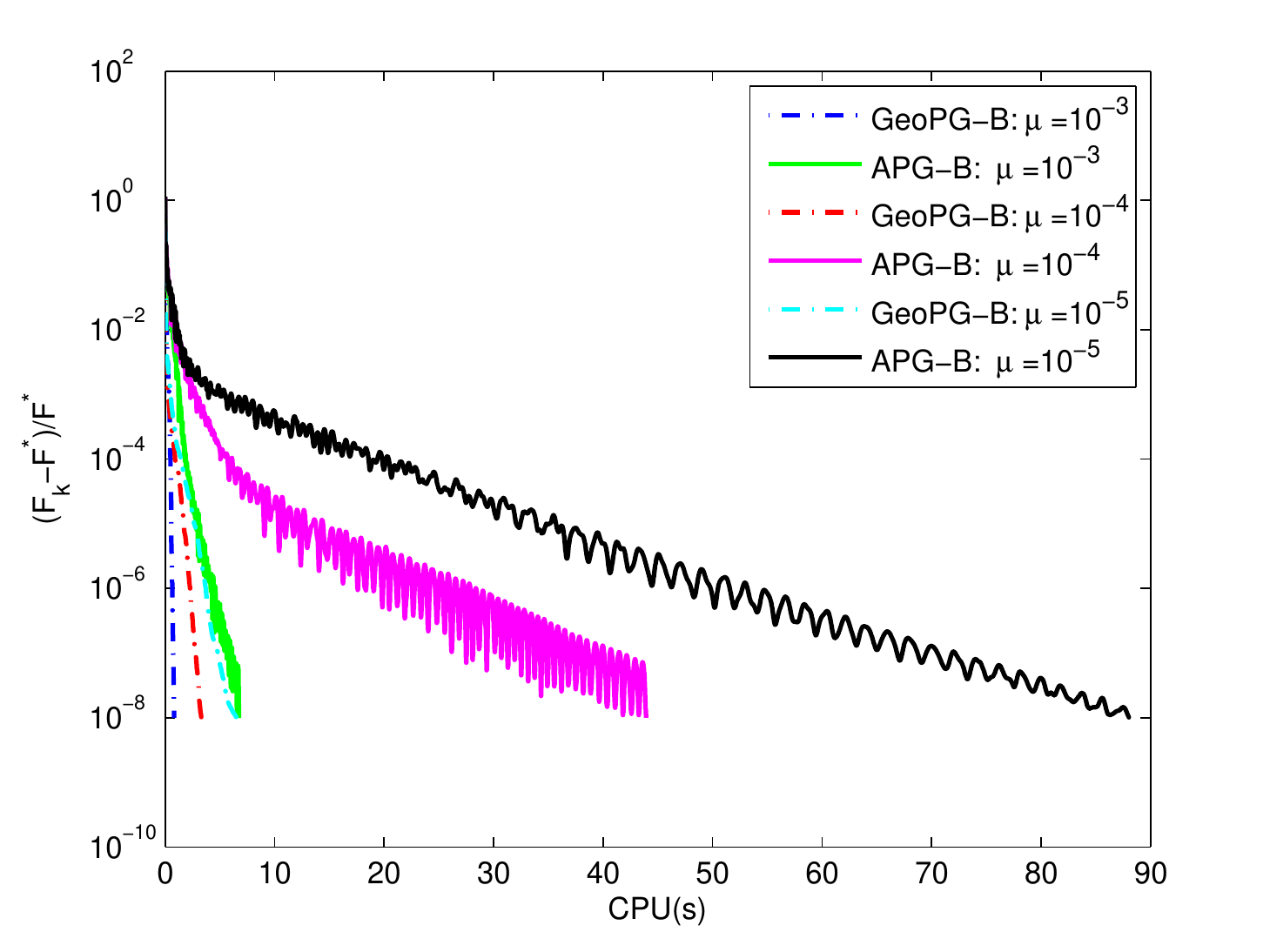}
\includegraphics[width=0.32\linewidth]{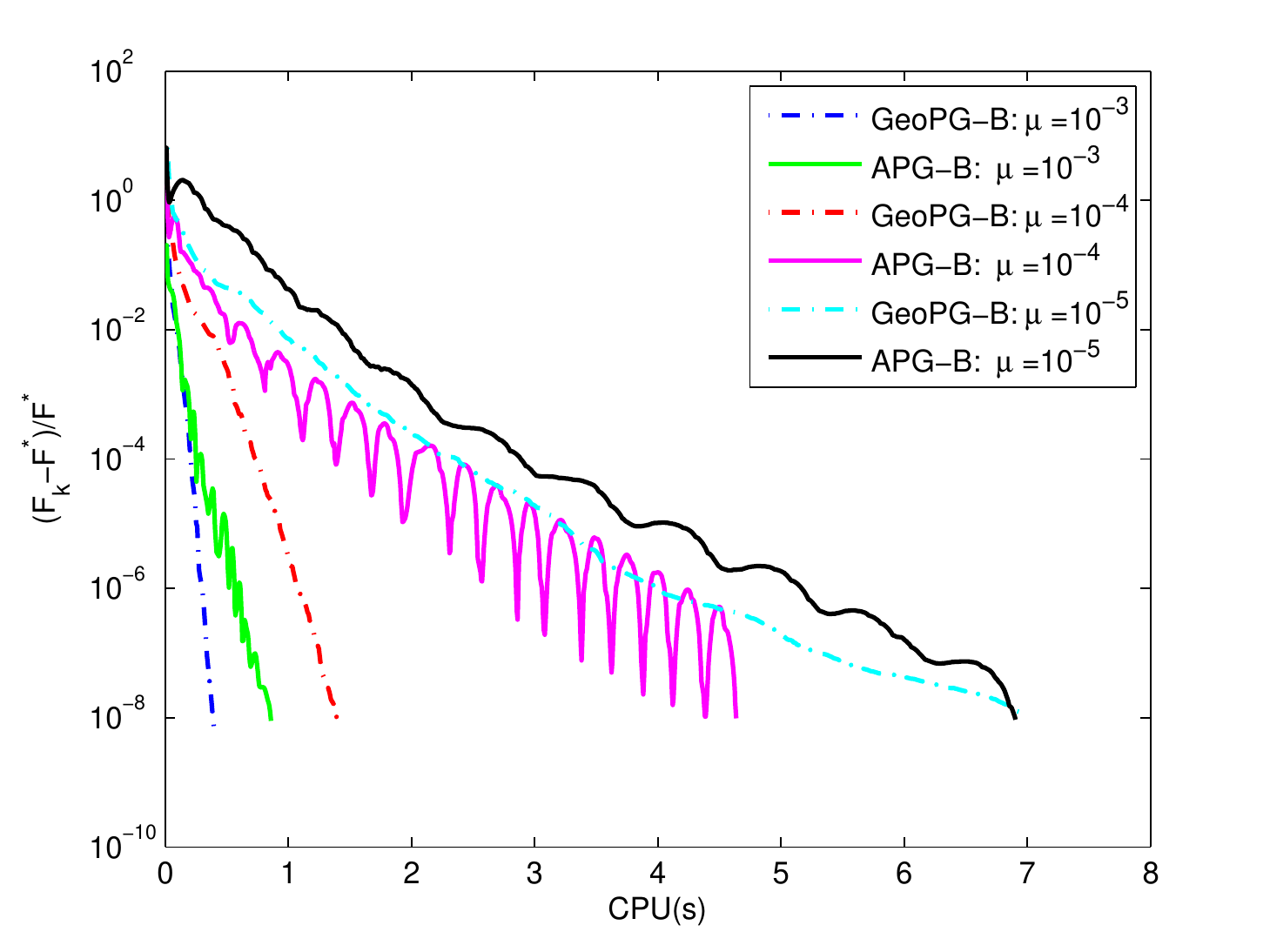}
\includegraphics[width=0.32\linewidth]{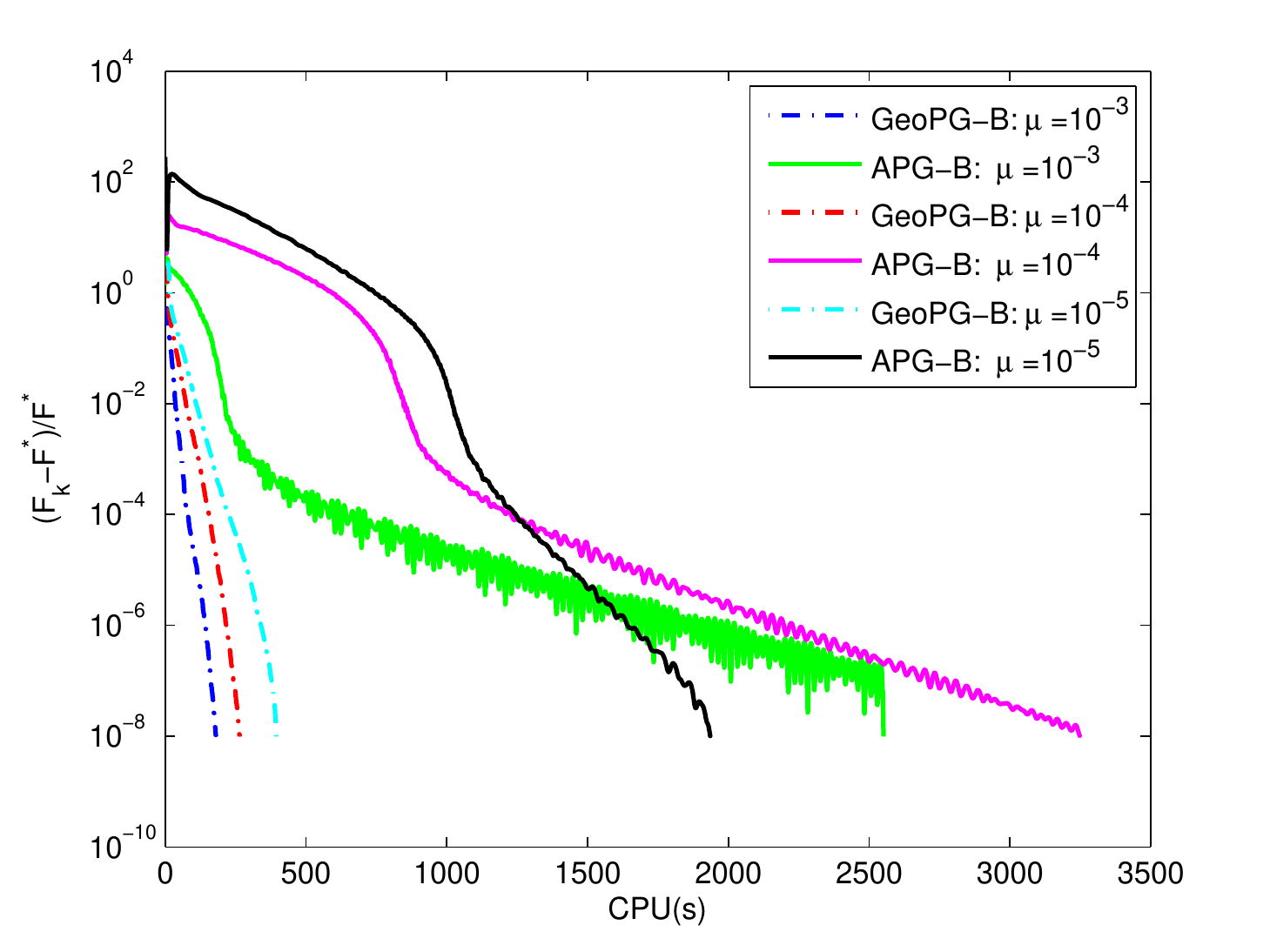}
\caption{GeoPG-B and APG-B for solving \eqref{LR} with $\alpha=10^{-8}$. Left: dataset a9a; Middle: dataset RCV1; Right: dataset Gisette.}
\label{figure_log}
\end{center}
\end{figure}

\subsection{Numerical results of L-GeoPG-B}

In this subsection, we test GeoPG with limited memory described in Algorithm \ref{Alg:L-GeoPG} for solving \eqref{LR} on the Gisette dataset. Since we still need to use the backtracking technique, we actually tested L-GeoPG-B. The results with different memory sizes $m$ are reported in Figure \ref{figure_mem}. Note that $m=0$ corresponds to the original GeoPG-B without memory. The subproblem \eqref{dual} is solved using the function ``quadprog'' in Matlab.
From Figure \ref{figure_mem} we see that roughly speaking, L-GeoPG-B performs better for larger memory sizes, and in most cases, the performance of L-GeoPG-B with $m=100$ is the best among the reported results. This indicates that the limited-memory idea indeed helps improve the performance of GeoPG.
\begin{figure}[ht]
\begin{center}
\includegraphics[width=0.32\linewidth]{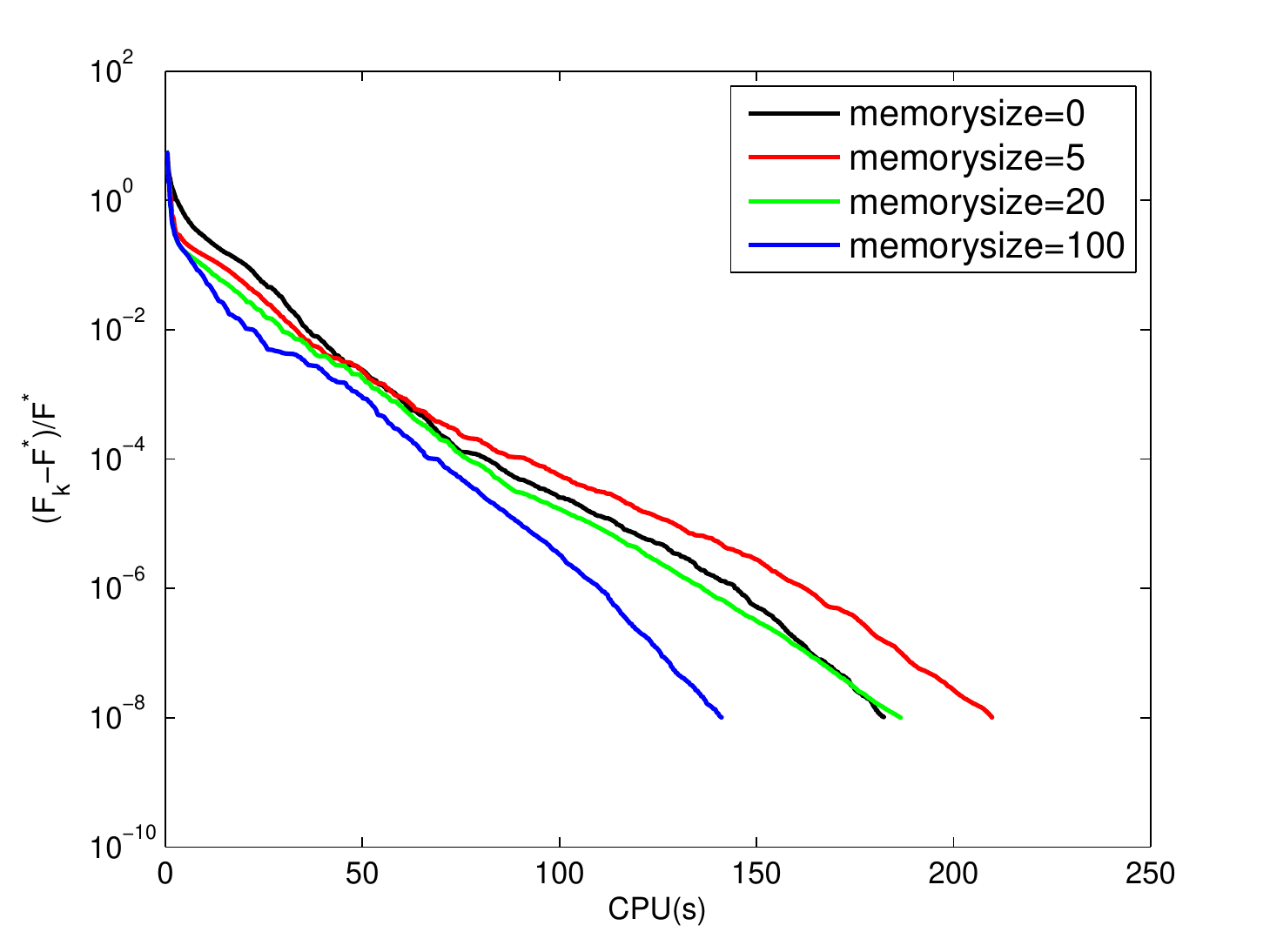}
\includegraphics[width=0.32\linewidth]{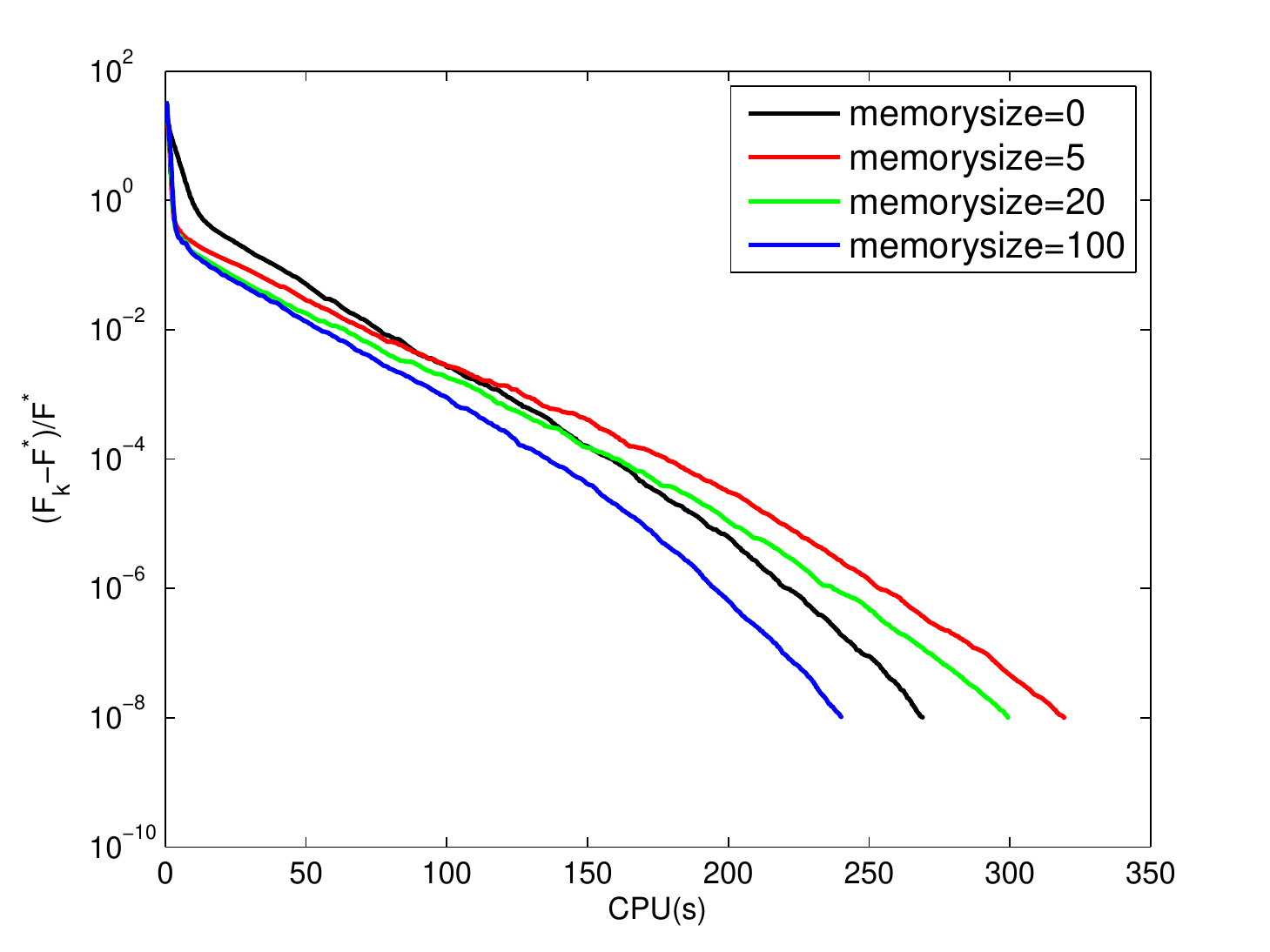}
\includegraphics[width=0.32\linewidth]{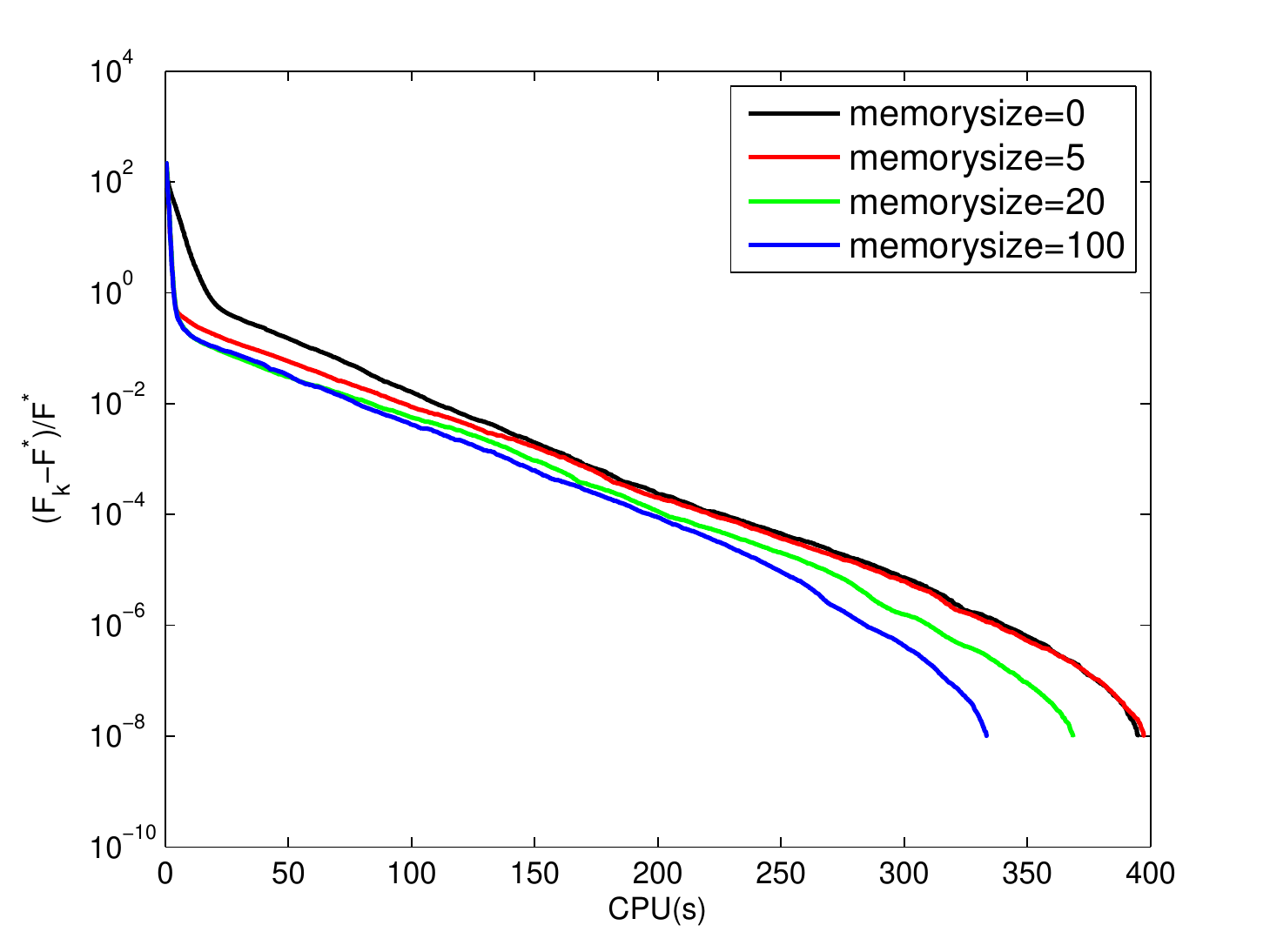}
\caption{L-GeoPG-B for solving \eqref{LR} on the dataset Gisette with $\alpha=10^{-8}$. Left: $\lambda = 10^{-3}$; Middle: $\lambda = 10^{-4}$; Right: $\lambda = 10^{-5}$.}
\label{figure_mem}
\end{center}
\end{figure}

\section{Conclusions}\label{sec:conclusion}
In this paper, we proposed a GeoPG algorithm for solving nonsmooth convex composite problems, which is an extension of the recent method GeoD that can only handle smooth problems. We proved that GeoPG enjoys the same optimal rate as Nesterov's accelerated gradient method for solving strongly convex problems. The backtracking technique was adopted to deal with the case when the Lipschitz constant is unknown. Limited-memory GeoPG was also developed to improve the practical performance of GeoPG. Numerical results on linear regression and logistic regression with elastic net regularization demonstrated the efficiency of GeoPG. It would be interesting to see how to extend GeoD and GeoPG to tackle non-strongly convex problems, and how to further accelerate the running time of GeoPG. We leave these questions in future work.

\newpage
\bibliography{All}
\bibliographystyle{unsrtnat}

\newpage
\appendix

{\Large\bf Appendix}

{
\section{Geometric Interpretation of GeoPG}
We argue that the geometric intuition of GeoPG is still clear. Note that we are still constructing two balls that contain $x^*$ and shrink at the same absolute amount. In GeoPG, since we assume that the smooth function $f$ is strongly convex, we naturally have one ball that contains $x^*$, and this ball is related to the proximal gradient $G_t$, instead of the gradient due to the presence of the nonsmooth function $h$. To construct the other ball, GeoD needs to perform an exact line search, while our GeoPG needs to find the root of a newly constructed function $\bar{\phi}$, which is again due to the presence of the nonsmooth function $h$. The two changes of GeoPG from GeoD are: replace gradient by proximal gradient; replace the exact line search by finding the root of $\bar{\phi}$, both of which are resulted by the presence of the nonsmooth function $h$.
}

\section{Proofs}

\subsection{Proof of Lemma 3.1}
\begin{proof}
From the $\beta$-smoothness of $f$, we have
\be\label{beta-smooth-inequality-prox}
f(x^+)\leq f(x)- t\inp{\nabla f(x)}{G_t(x)} +\frac{t}{2}\normtwo{G_t(x)}^2.
\ee
Combining \eqref{beta-smooth-inequality-prox} with
\begin{equation}\label{sc-lower-bound}
f(x) + \inp{\nabla f(x)}{y-x} +\frac{\alpha}{2}\normtwo{y-x}^2 \leq f(y), \ \forall x,y\in\br^n,
\end{equation}
yields that
\begin{equation}\label{maineq}
\begin{aligned}
F(x^+)&\leq f(y)-\inp{\nabla f(x)}{y-x} -\frac{\alpha}{2}\normtwo{y-x}^2- t\inp{\nabla f(x)}{G_t(x)} +\frac{t}{2}\normtwo{G_t(x)}^2 + h(x^+)\\
&=F(y) -\frac{\alpha}{2}\normtwo{y-x}^2+\frac{t}{2}\normtwo{G_t(x)}^2 + h(x^+)-h(y)-\inp{\nabla f(x)-G_t(x)}{y-x^+}-\inp{G_t(x)}{y-x^+}\\
&\leq F(y) -\frac{\alpha}{2}\normtwo{y-x}^2+\frac{t}{2}\normtwo{G_t(x)}^2 -\inp{G_t(x)}{y-x^+},
\end{aligned}
\end{equation}
where the last inequality is due to the convexity of $h$ and
$G_t(x)\in \nabla f(x) + \partial h(x^+)$.
\end{proof}

\subsection{Proof of Lemma 3.2}
\begin{proof}
Assume
\be\label{equ23-reduced}
\langle x_k^+ - x_k, x_{k-1}^+-x_k \rangle \leq 0, \mbox{ and } \langle x_k^+ - x_k, x_k - c_{k-1}\rangle \geq 0,
\ee
holds. By letting $y=x_{k-1}^+$ and $x=x_k$ in \eqref{maineq}, we have
\begin{align*}
F(x_k^+) &\leq F(x_{k-1}^+) -\inp{G_t(x_k)}{x_{k-1}^+-x_k}-\frac{t}{2}\normtwo{G_t(x_k)}^2-\frac{\alpha}{2}\normtwo{x_{k-1}^+-x_k}^2\\
&= F(x_{k-1}^+) +\frac{1}{t}\inp{x_k^+ -x_k}{x_{k-1}^+-x_k}-\frac{t}{2}\normtwo{G_t(x_k)}^2-\frac{\alpha}{2}\normtwo{x_{k-1}^+-x_k}^2\\
& \leq F(x_{k-1}^+)-\frac{t}{2}\normtwo{G_t(x_k)}^2,
\end{align*}
where the last inequality is due to \eqref{equ23-reduced}. Moreover, from the definition of $x_k^{++}$ and \eqref{equ23-reduced} it is easy to see
\begin{align*}
\normtwo{x_k^{++}-c_{k-1}}^2&=\normtwo{x_k-c_{k-1}}^2 +\frac{2}{\alpha t} \inp{x_k^+ -x_k}{x_{k}-c_{k-1}}+\frac{1}{\alpha^2}\normtwo{G_t(x_k)}^2 \geq \frac{1}{\alpha^2}\normtwo{G_t(x_k)}^2.
\end{align*}
\end{proof}

\subsection{Proof of Lemma 3.3}
Before we prove Lemma 3.3, we need the following well-know result, which can be found in \cite{lee2014proximal}.

\noindent{\bf Lemma.} (see Lemma 3.9 of \cite{lee2014proximal})
For $t\in(0,1/\beta]$, $G_t(x)$ is strongly monotone, i.e.,
\be\label{lemma-lee}
 \inp{G_t(x)- G_t(y)}{x-y} \geq \frac{\alpha}{2} \normtwo{x-y}^2, \forall x,y.
\ee

We are now ready to prove Lemma 3.3.
\begin{proof}
We prove (i) first.
\begin{align*}
\abs{\phi_{t,x,c}(z_1)-\phi_{t,x,c}(z_2)}&=\abs{\inp{z_1^+-z_1 -(z_2^+-z_2)}{x-c}} \leq \normtwo{z_1^+-z_2^+-(z_1-z_2)}\normtwo{x-c}\\
&\leq (\normtwo{\text{prox}_{th}(z_1-t\nabla f(z_1))-\text{prox}_{th}(z_2-t\nabla f(z_2))}+\normtwo{z_1-z_2})\normtwo{x-c}\\
&\leq (2+t\beta)\normtwo{x-c}\normtwo{z_1-z_2},
\end{align*}
where the last inequality is due to the non-expansiveness of the proximal mapping operation.

We now prove (ii). For $s_1<s_2$, let $z_1 = x+s_1(c-x)$ and $z_2= x+s_2(c-x)$. We have
\begin{align*}
\bar\phi_{t,x,c}(s_2)-\bar\phi_{t,x,c}(s_1)&=\inp{z_2^+-z_2 -(z_1^+-z_1)}{x-c} = \frac{t}{s_2-s_1}\inp{G_t(z_2)-G_t(z_1)}{z_2-z_1}\\
&\geq \frac{\alpha t}{2}(s_2-s_1)\normtwo{x-c}^2>0,
\end{align*}
where the first inequality follows from \eqref{lemma-lee}.
\end{proof}

\section{Numerical Experiment on Other Datasets}\label{sec:num}
In this section, we report some numerical results of other data sets. Here  we set the terminate condition as $\|G_t(x_k^+)\|_\infty\leq tol$ for GeoP-B and $\|G_t(x_k)\|_\infty\leq tol$  for APG-B.

\subsection{Linear regression with elastic net regularization}
In this subsection, we compare GeoPG-B and APG-B for solving linear regression with elastic net regularization:
\be\label{lasso-en}
\min_{x\in\br^n} \ \frac{1}{2p} \normtwo{Ax-b}^2 +\frac{\alpha}{2} \normtwo{x}^2 + \mu \normone{x},
\ee
where $A\in \br^{p\times n}$, $b\in\br^p$, $\alpha,\mu>0$ are weighting parameters.

We first compare these two algorithms on some synthetic data. In our experiments, entries of $A$ were drawn randomly from the standard Gaussian distribution, the solution $\bar{x}$ was a sparse vector with 10\% nonzero entries whose locations are uniformly random and whose values follow the Gaussian distribution $3*\mathcal{N}(0,1)$, and $b = A*\bar{x} + \mathbf{n}$, where the noise $\mathbf{n}$ follows the Gaussian distribution $0.01*\mathcal{N}(0,1)$. Moreover, since we assume that the strong convexity parameter of \eqref{lasso-en} is equal to $\alpha$, when $p>n$, we manipulate $A$ such that the smallest eigenvalue of $A^\top A$ is equal to 0. Specifically, when $p>n$, we truncate the smallest eigenvalue of $A^\top A$ to 0, and obtain the new $A$ by eigenvalue decomposition of $A^\top A$. We set $tol = 10^{-8}$.

In Tables \ref{tab:lasso-1}, \ref{tab:lasso-2} and \ref{tab:lasso-3}, we report the comparison results of GeoPG-B and APG-B for solving different instances of \eqref{lasso-en}. We use ``f-ev'', ``g-ev'', ``p-ev'' and ``MVM'' to denote the number of evaluations of objective function, gradient, proximal mapping of $\ell_1$ norm, and matrix-vector multiplications, respectively. The CPU times are in seconds. We use ``--'' to denote that the algorithm does not converge in $10^5$ iterations. We tested different values of $\alpha$, which reflect different condition numbers of the problem. We also tested different values of $\mu$, which was set to $\mu = (10^{-3},10^{-4},10^{-5})/p\times \|A^\top b\|_\infty$, respectively. ``f-diff'' denotes the absolute difference of the objective values returned by the two algorithms.

From Tables \ref{tab:lasso-1}, \ref{tab:lasso-2} and \ref{tab:lasso-3} we see that GeoPG-B is more efficient than APG-B in terms of CPU time when $\alpha$ is small. For example, Table \ref{tab:lasso-1} indicates that GeoPG-B is faster than APG-B when $\alpha\leq 10^{-4}$, Table \ref{tab:lasso-2} indicates that GeoPG-B is faster than APG-B when $\alpha\leq 10^{-6}$, and Table \ref{tab:lasso-3} shows that GeoPG-B is faster than APG-B when $\alpha\leq 10^{-8}$. Since a small $\alpha$ corresponds to a large condition number, we can conclude that in this case GeoPG-B is more preferable than APG-B for ill-conditioned problems. Note that ``f-diff'' is very small in all cases, which indicates that the solutions returned by GeoPG-B and APG-B are very close.

We also conducted tests on three real datasets downloaded from the LIBSVM repository: a9a, RCV1 and Gisette, among which a9a and RCV1 are sparse and Gisette is dense. The size and sparsity (percentage of nonzero entries) of these three datasets are $(32561\times 123, 11.28\%)$, $(20242\times 47236, 0.16\%)$ and $(6000\times 5000, 99.1\%)$, respectively. The results are reported in Tables \ref{tab: leq a9a}, \ref{tab: leq rcv1} and \ref{tab: leq gisette}, where $\alpha=10^{-2}, 10^{-4}, 10^{-6}, 10^{-8}, 10^{-10}$ and $\mu=10^{-3}, 10^{-4}, 10^{-5}$. We see from these tables that GeoPG-B is faster than APG-B on these real datasets when $\alpha$ is small, i.e., when the problem is more ill-conditioned.

\subsection{Logistic regression with elastic net regularization}

In this subsection, we compare the performance of GeoPG-B and APG-B for solving the following logistic regression problem with elastic net regularization:
\be\label{LR}
\min_{x\in\br^n} \ \frac{1}{p} \sum_{i=1}^{p} \log (1+\exp(-b_i \cdot a_i^\top x)) +\frac{\alpha}{2} \normtwo{x}^2 +\mu\normone{x},
\ee
where $a_i \in \br^n$ and $b_i \in \{\pm 1\}$ are the feature vector and class label of the $i$-th sample, respectively,
and $\alpha$, $\mu > 0$ are weighting parameters.

We first compare GeoPG-B and APG-B for solving \eqref{LR} on some synthetic data. In our experiments, each $a_i$ was drawn randomly from the standard Gaussian distribution, the linear model parameter $\bar{x}$ was a sparse vector with 10\% nonzero entries whose locations are uniformly random and whose values follow the Gaussian distribution $3*\mathcal{N}(0,1)$, and $\ell = A*\bar{x} + \mathbf{n}$, where noise $\mathbf{n}$ follows the Gaussian distribution $0.01*\mathcal{N}(0,1)$. Then, we generate class labels as bernoulli random variables with the parameter $1/(1+\exp{\ell_i})$. We set $tol = 10^{-8}$.

In Tables \ref{tab:LR-Syn-1}, \ref{tab:LR-Syn-2} and \ref{tab:LR-Syn-3} we report the comparison results of GeoPG-B and APG-B for solving different instances of \eqref{LR}. From results in these tables we again observe that GeoPG-B is faster than APG-B when $\alpha$ is small, i.e., when the condition number is large.

We also tested GeoPG-B and APG-B for solving \eqref{LR} on the three real datasets a9a, RCV1 and Gisette from LIBSVM, and the results are reported in Tables \ref{tab: log a9a}, \ref{tab: log rcv1} and \ref{tab: log gisette}. We again have the similar observations as before, i.e., GeoPG-B is faster than APG-B for more ill-conditioned problems.

{
\subsection{More discussions on the numerical results}
To the best of our knowledge, the FISTA algorithm \cite{Beck-Teboulle-2009} does not have a counterpart for strongly convex problem, but we still conducted some numerical experiments using FISTA for solving the above problems. We found that FISTA and APG are comparable, but they are both worse than GeoPG for more ill-conditioned problems. Moreover, from the results in this section, we can see that when the problem is well-posed such as $\alpha = 0.01$, APG is usually faster than GeoPG in the CPU time, and when the problem is ill-posed such as $\alpha = 10^{-6}$, $10^{-8}$, $10^{-10}$, GeoPG is usually faster, but the iterate of GeoPG is less than APG in the most cases. So GeoPG is not always better than APG in the CPU time. But since ill-posed problems are more challenging to solve, we believe that these numerical results showed the potential of GeoPG. The reason why GeoPG is better than APG for ill-posed problem is still not clear at this moment, but we think that it might be related to the fact that APG is not monotone but GeoPG is, which can be seen from the figures in our paper. Furthermore, although GeoPG requires to find the root of a function $\bar{\phi}$ in each iteration, we found that a very good approximation of the root can be obtained by running the semi-smooth Newton method for 1-2 iterations on average. This explains why these steps of GeoPG do not bring much trouble in practice.
}

\subsection{Numerical results of L-GeoPG-B}

In this subsection, we tested GeoPG-B with limited memory described in Algorithm 5 on solving \eqref{LR} on Gisette dataset.  The results for different memory size $m$ are reported in Table \ref{tab: mem log gisette}. Note that $m=0$ corresponds to the original GeoPG-B without memory.

From Table \ref{tab: mem log gisette} we see that roughly speaking, L-GeoPG-B performs better for larger memory size, and in almost all cases, the performance of L-GeoPG-B with $m=100$ is the best among the reported results. This indicates that the limited-memory idea indeed helps improve the performance of GeoPG.

\begin{table}[H]\scriptsize
 \caption{GeoPG-B and APG-B for solving linear regression with elastic net regularization. $p=4000, n=2000$}\label{tab:lasso-1}

  \centering
    \begin{tabular}{|c|c|c|c|c|c|c|c|c|c|c|c|c|c|}
    \hline
          & \multicolumn{6}{c|}{APG-B}                    & \multicolumn{6}{c|}{GeoPG-B}                &  \bigstrut\\
    \hline
    $\alpha$ & iter  & cpu   & f-ev  & g-ev  & p-ev   & MVM   & iter & cpu   & f-ev  & g-ev  & p-ev   & MVM   & f-diff \bigstrut\\
    \hline
    \hline
    \multicolumn{14}{|c|}{$\mu=1.136e-02  $} \bigstrut\\
    \hline
    \hline
    $10^{-2}$ & 172   & 1.0   & 354   & 326   & 194   & 384   & 156   & 1.1   & 457   & 348   & 352   & 398   & 8.5e-14 \bigstrut\\
    \hline
    $10^{-4}$ & 538   & 2.8   & 1116  & 1020  & 611   & 1203  & 95    & 0.7   & 267   & 240   & 245   & 247   & 6.4e-14 \bigstrut\\
    \hline
    $10^{-6}$ & 905   & 4.9   & 1868  & 1715  & 1029  & 2030  & 94    & 0.7   & 260   & 249   & 254   & 247   & 5.0e-14 \bigstrut\\
    \hline
    $10^{-8}$ & 1040  & 5.4   & 2146  & 2003  & 1182  & 2332  & 95    & 0.7   & 263   & 258   & 263   & 247   & 1.4e-14 \bigstrut\\
    \hline
    $10^{-10}$ & 964   & 5.0   & 2002  & 1805  & 1095  & 2154  & 95    & 0.7   & 263   & 267   & 272   & 247   & 2.1e-14 \bigstrut\\
    \hline
    \hline
    \multicolumn{14}{|c|}{$\mu=1.136e-03  $} \bigstrut\\
    \hline
    \hline
    $10^{-2}$ & 175   & 0.9   & 356   & 332   & 197   & 392   & 168   & 1.2   & 493   & 384   & 388   & 432   & 1.3e-13 \bigstrut\\
    \hline
    $10^{-4}$ & 687   & 3.6   & 1414  & 1304  & 779   & 1539  & 145   & 1.0   & 411   & 392   & 397   & 377   & 1.5e-14 \bigstrut\\
    \hline
    $10^{-6}$ & 999   & 5.1   & 2086  & 1676  & 1134  & 2225  & 140   & 1.0   & 371   & 384   & 394   & 354   & 6.5e-14 \bigstrut\\
    \hline
    $10^{-8}$ & 1122  & 5.8   & 2348  & 1827  & 1275  & 2499  & 143   & 1.0   & 374   & 420   & 429   & 365   & 1.8e-15 \bigstrut\\
    \hline
    $10^{-10}$ & 1142  & 5.9   & 2388  & 1858  & 1298  & 2545  & 143   & 1.0   & 374   & 449   & 458   & 365   & 6.2e-15 \bigstrut\\
    \hline
    \hline
    \multicolumn{14}{|c|}{$\mu=1.136e-04  $} \bigstrut\\
    \hline
    \hline
    $10^{-2}$ & 168   & 0.9   & 346   & 314   & 189   & 374   & 113   & 0.8   & 328   & 252   & 256   & 296   & 1.4e-14 \bigstrut\\
    \hline
    $10^{-4}$ & 911   & 4.8   & 1836  & 1853  & 1035  & 2064  & 207   & 1.5   & 603   & 587   & 592   & 535   & 4.1e-14 \bigstrut\\
    \hline
    $10^{-6}$ & 2293  & 11.9  & 4744  & 3936  & 2605  & 5132  & 191   & 1.4   & 523   & 596   & 602   & 492   & 3.8e-14 \bigstrut\\
    \hline
    $10^{-8}$ & 3979  & 20.5  & 8266  & 5923  & 4526  & 8899  & 199   & 1.4   & 500   & 713   & 728   & 501   & 9.8e-14 \bigstrut\\
    \hline
    $10^{-10}$ & 4503  & 23.3  & 9364  & 6668  & 5123  & 10068 & 185   & 1.3   & 456   & 624   & 639   & 465   & 5.9e-14 \bigstrut\\
     \hline
    \end{tabular}
\end{table}

\begin{table}[H]\scriptsize
 \caption{GeoPG-B and APG-B for solving linear regression with elastic net regularization. $p=2000, n=2000$ }\label{tab:lasso-2}

  \centering
\begin{tabular}{|c|c|c|c|c|c|c|c|c|c|c|c|c|c|}
\hline
      & \multicolumn{6}{c|}{APG-B}                    & \multicolumn{6}{c|}{GeoPG-B}                &  \bigstrut\\
\hline
$\alpha$ & iter  & cpu   & f-ev  & g-ev  & p-ev   & MVM   & iter & cpu   & f-ev  & g-ev  & p-ev   & MVM   & f-diff \bigstrut\\
\hline
\hline
\multicolumn{14}{|c|}{$\mu=1.50e-02  $} \bigstrut\\
\hline
\hline
$10^{-2}$ & 244   & 0.7   & 498   & 475   & 276   & 548   & 304   & 1.3   & 889   & 690   & 694   & 774   & 3.4e-13 \bigstrut\\
\hline
$10^{-4}$ & 1800  & 4.8   & 3690  & 3582  & 2046  & 4048  & 545   & 2.4   & 1569  & 1298  & 1308  & 1378  & 1.3e-12 \bigstrut\\
\hline
$10^{-6}$ & 9706  & 26.0  & 19722 & 20445 & 11040 & 21926 & 557   & 2.3   & 1598  & 1328  & 1339  & 1415  & 2.8e-12 \bigstrut\\
\hline
$10^{-8}$ & 20056 & 53.7  & 40528 & 43361 & 22817 & 45427 & 561   & 2.3   & 1614  & 1332  & 1344  & 1416  & 2.4e-12 \bigstrut\\
\hline
$10^{-10}$ & 20473 & 53.9  & 41426 & 44159 & 23298 & 46357 & 565   & 2.3   & 1626  & 1373  & 1385  & 1436  & 2.4e-12 \bigstrut\\
\hline
\hline
\multicolumn{14}{|c|}{$\mu=1.50e-03  $} \bigstrut\\
\hline
\hline
$10^{-2}$ & 241   & 0.6   & 496   & 463   & 273   & 540   & 280   & 1.2   & 813   & 634   & 638   & 716   & 1.4e-14 \bigstrut\\
\hline
$10^{-4}$ & 1926  & 5.1   & 3968  & 3708  & 2188  & 4319  & 1218  & 5.0   & 3560  & 2875  & 2892  & 3073  & 2.0e-11 \bigstrut\\
\hline
$10^{-6}$ & 12502 & 32.7  & 25658 & 24681 & 14222 & 28118 & 1297  & 5.3   & 3718  & 3065  & 3097  & 3262  & 1.1e-11 \bigstrut\\
\hline
$10^{-8}$ & 47139 & 124.3  & 95560 & 100584 & 53646 & 106652 & 1289  & 5.3   & 3686  & 3043  & 3074  & 3245  & 2.1e-11 \bigstrut\\
\hline
$10^{-10}$ & 72186 & 194.3  & 145934 & 156713 & 82157 & 163534 & 1297  & 5.2   & 3717  & 3098  & 3132  & 3262  & 2.5e-11 \bigstrut\\
\hline
\hline
\multicolumn{14}{|c|}{$\mu=1.50e-04 $} \bigstrut\\
\hline
\hline
$10^{-2}$ & 239   & 0.6   & 488   & 460   & 270   & 536   & 225   & 0.9   & 648   & 510   & 514   & 584   & 3.3e-13 \bigstrut\\
\hline
$10^{-4}$ & 1985  & 5.2   & 4048  & 3860  & 2257  & 4476  & 1713  & 6.9   & 5041  & 4040  & 4058  & 4322  & 7.0e-11 \bigstrut\\
\hline
$10^{-6}$ & 13824 & 35.7  & 28534 & 25354 & 15726 & 31010 & 2527  & 10.2  & 7225  & 6019  & 6082  & 6345  & 2.5e-11 \bigstrut\\
\hline
$10^{-8}$ & 56339 & 146.2  & 116280 & 106460 & 64105 & 126410 & 2594  & 10.6  & 7288  & 6095  & 6182  & 6491  & 3.6e-11 \bigstrut\\
\hline
$10^{-10}$ & $-$   & $-$   & $-$   & $-$   & $-$   & $-$   & 2573  & 10.4  & 7217  & 6075  & 6163  & 6446  & $-$ \bigstrut\\
\hline
\end{tabular}%

\end{table}

\begin{table}[H]\scriptsize
\caption{GeoPG-B and APG-B for solving linear regression with elastic net regularization. $p=2000, n=4000$} \label{tab:lasso-3}

  \centering

\begin{tabular}{|c|c|c|c|c|c|c|c|c|c|c|c|c|c|}
\hline
      & \multicolumn{6}{c|}{APG-B}                    & \multicolumn{6}{c|}{GeoPG-B}                &  \bigstrut\\
\hline
$\alpha$ & iter  & cpu   & f-ev  & g-ev  & p-ev   & MVM   & iter & cpu   & f-ev  & g-ev  & p-ev   & MVM   & f-diff \bigstrut\\
\hline
\hline
\multicolumn{14}{|c|}{$\mu= 1.82e-02 $} \bigstrut\\
\hline
\hline
$10^{-2}$ & 327   & 1.9   & 660   & 680   & 371   & 740   & 387   & 2.8   & 1117  & 936   & 946   & 980   & 2.0e-13 \bigstrut\\
\hline
$10^{-4}$ & 2263  & 12.8  & 4620  & 4445  & 2571  & 5096  & 2454  & 17.9  & 6858  & 6181  & 6225  & 6168  & 4.3e-11 \bigstrut\\
\hline
$10^{-6}$ & 12579 & 67.5  & 25566 & 26229 & 14312 & 28421 & 4478  & 32.7  & 12494 & 11180 & 11216 & 11300 & 1.8e-11 \bigstrut\\
\hline
$10^{-8}$ & 55577 & 299.3  & 112140 & 121939 & 63268 & 126044 & 4595  & 33.7  & 12814 & 11754 & 11795 & 11609 & 1.4e-10 \bigstrut\\
\hline
$10^{-10}$ & $-$   & $-$   & $-$   & $-$   & $-$   & $-$   & 4645  & 34.6  & 13204 & 12088 & 12129 & 11729 & $-$ \bigstrut\\
\hline
\hline
\multicolumn{14}{|c|}{$\mu= 1.82e-03$} \bigstrut\\
\hline
\hline
$10^{-2}$ & 306   & 1.7   & 622   & 621   & 346   & 688   & 279   & 2.1   & 813   & 677   & 684   & 713   & 6.4e-13 \bigstrut\\
\hline
$10^{-4}$ & 2355  & 12.7  & 4820  & 4534  & 2675  & 5296  & 2634  & 19.3  & 7482  & 6774  & 6846  & 6596  & 3.9e-13 \bigstrut\\
\hline
$10^{-6}$ & 14827 & 79.8  & 30328 & 28671 & 16862 & 33388 & 12756 & 94.1  & 36510 & 32580 & 32735 & 32121 & 2.2e-10 \bigstrut\\
\hline
$10^{-8}$ & 56286 & 305.7  & 114576 & 115199 & 64050 & 127099 & 11665 & 88.0  & 32397 & 32580 & 31987 & 29352 & 6.1e-11 \bigstrut\\
\hline
$10^{-10}$ & $-$   & $-$   & $-$   & $-$   & $-$   & $-$   & 13830 & 102.4  & 38547 & 37931 & 38088 & 34885 & $-$ \bigstrut\\
\hline
\hline
\multicolumn{14}{|c|}{$\mu= 1.82e-04 $} \bigstrut\\
\hline
\hline
$10^{-2}$ & 283   & 1.5   & 576   & 560   & 320   & 636   & 219   & 1.6   & 643   & 523   & 528   & 561   & 4.7e-13 \bigstrut\\
\hline
$10^{-4}$ & 2420  & 13.2  & 4864  & 5242  & 2749  & 5487  & 2339  & 17.2  & 6818  & 6467  & 6509  & 5882  & 5.8e-11 \bigstrut\\
\hline
$10^{-6}$ & 16882 & 91.4  & 34412 & 31337 & 19186 & 38049 & 14803 & 109.3  & 41943 & 44052 & 44384 & 37152 & 4.9e-10 \bigstrut\\
\hline
$10^{-8}$ & 79693 & 430.5  & 163098 & 146951 & 90639 & 179423 & 41331 & 305.8  & 116983 & 113344 & 113952 & 104206 & 1.6e-10 \bigstrut\\
\hline
$10^{-10}$ & $-$   & $-$   & $-$   & $-$   & $-$   & $-$   & 47501 & 350.2  & 129513 & 151332 & 152224 & 119660 & $-$ \bigstrut\\
\hline
\end{tabular}%

\end{table}

\begin{table}[H]\scriptsize
  \caption{GeoPG-B and APG-B for solving linear regression with elastic net regularization on dataset a9a}\label{tab: leq a9a}

  \centering
\begin{tabular}{|c|c|c|c|c|c|c|c|c|c|c|c|c|c|}
\hline
      & \multicolumn{6}{c|}{APG-B}                    & \multicolumn{6}{c|}{GeoPG-B}                &  \bigstrut\\
\hline
$\alpha$ & iter  & cpu   & f-ev  & g-ev  & p-ev   & MVM   & iter & cpu   & f-ev  & g-ev  & p-ev   & MVM   & f-diff \bigstrut\\
\hline
\hline
\multicolumn{14}{|c|}{$\lambda=1e-03 $} \bigstrut\\
\hline
\hline
$10^{-2}$ & 266   & 0.3   & 540   & 530   & 301   & 599   & 260   & 0.6   & 769   & 602   & 608   & 662   & 1.3e-14 \bigstrut\\
\hline
$10^{-4}$ & 1758  & 1.7   & 3562  & 3683  & 1998  & 3974  & 463   & 1.1   & 1374  & 1138  & 1144  & 1196  & 1.2e-14 \bigstrut\\
\hline
$10^{-6}$ & 10790 & 10.4  & 21654 & 23858 & 12277 & 24518 & 410   & 0.9   & 1216  & 964   & 970   & 1058  & 1.5e-13 \bigstrut\\
\hline
$10^{-8}$ & 23279 & 22.2  & 46646 & 52163 & 26493 & 52943 & 412   & 0.9   & 1222  & 976   & 982   & 1060  & 1.9e-13 \bigstrut\\
\hline
$10^{-10}$ & 26057 & 24.9  & 52236 & 58464 & 29660 & 59260 & 431   & 0.9   & 1279  & 1063  & 1069  & 1104  & 2.2e-13 \bigstrut\\
\hline
\hline
\multicolumn{14}{|c|}{$\lambda=1e-04 $} \bigstrut\\
\hline
\hline
$10^{-2}$ & 267   & 0.3   & 544   & 526   & 302   & 600   & 249   & 0.5   & 734   & 571   & 577   & 642   & 6.7e-16 \bigstrut\\
\hline
$10^{-4}$ & 1948  & 1.9   & 3934  & 4100  & 2214  & 4410  & 1587  & 3.4   & 4747  & 3946  & 3951  & 4025  & 2.9e-12 \bigstrut\\
\hline
$10^{-6}$ & 14954 & 14.3  & 30012 & 33215 & 17018 & 33985 & 4801  & 10.4  & 14388 & 11381 & 11386 & 12223 & 1.4e-12 \bigstrut\\
\hline
$10^{-8}$ & 63920 & 60.9  & 127954 & 144494 & 72741 & 145426 & 910   & 2.0   & 2715  & 2629  & 2634  & 2347  & 3.7e-12 \bigstrut\\
\hline
$10^{-10}$ & 94861 & 90.6  & 189814 & 214931 & 107970 & 215895 & 910   & 2.0   & 2715  & 2441  & 2446  & 2333  & 7.0e-13 \bigstrut\\
\hline
\hline
\multicolumn{14}{|c|}{$\lambda=1e-05 $} \bigstrut\\
\hline
\hline
$10^{-2}$ & 258   & 0.3   & 518   & 507   & 292   & 584   & 235   & 0.5   & 692   & 596   & 602   & 604   & 1.2e-14 \bigstrut\\
\hline
$10^{-4}$ & 2035  & 1.9   & 4088  & 4319  & 2315  & 4622  & 1701  & 3.7   & 5090  & 4267  & 4273  & 4312  & 3.7e-12 \bigstrut\\
\hline
$10^{-6}$ & 16353 & 15.6  & 32768 & 36396 & 18609 & 37188 & 5773  & 12.5  & 17306 & 14961 & 14967 & 14808 & 4.5e-13 \bigstrut\\
\hline
$10^{-8}$ & 85246 & 81.4  & 170570 & 193007 & 97062 & 194086 & 2109  & 4.6   & 6314  & 6403  & 6409  & 5382  & 2.5e-11 \bigstrut\\
\hline
$10^{-10}$ & $-$   & $-$   & $-$   & $-$   & $-$   & $-$   & 2318  & 5.0   & 6941  & 6709  & 6715  & 5896  & $-$ \bigstrut\\
\hline
\end{tabular}%

\end{table}

\begin{table}[htbp]\scriptsize
  \caption{GeoPG-B and APG-B for solving linear regression with elastic net regularization on dataset rcv1}\label{tab: leq rcv1}

  \centering
\begin{tabular}{|c|c|c|c|c|c|c|c|c|c|c|c|c|c|}
\hline
      & \multicolumn{6}{c|}{APG-B}                    & \multicolumn{6}{c|}{GeoProx-B}                &  \bigstrut\\
\hline
$\alpha$ & iter  & cpu   & f-ev  & g-ev  & p-ev   & MVM   & f-diff & cpu   & f-ev  & g-ev  & p-ev   & MVM   & f-diff \bigstrut\\
\hline
\hline
\multicolumn{14}{|c|}{$\lambda=1e-03 $} \bigstrut\\
\hline
\hline
$10^{-2}$ & 18    & 0.1   & 34    & 34    & 20    & 42    & 14    & 0.2   & 39    & 31    & 32    & 43    & 5.5e-14 \bigstrut\\
\hline
$10^{-4}$ & 74    & 0.3   & 148   & 141   & 82    & 165   & 95    & 0.7   & 273   & 231   & 232   & 245   & 7.8e-13 \bigstrut\\
\hline
$10^{-6}$ & 329   & 1.5   & 678   & 617   & 372   & 735   & 103   & 0.8   & 296   & 265   & 268   & 269   & 6.6e-13 \bigstrut\\
\hline
$10^{-8}$ & 908   & 4.2   & 1872  & 1721  & 1033  & 2039  & 133   & 1.0   & 380   & 344   & 345   & 341   & 7.0e-13 \bigstrut\\
\hline
$10^{-10}$ & 1277  & 5.9   & 2630  & 2482  & 1454  & 2871  & 116   & 0.9   & 332   & 331   & 332   & 301   & 1.1e-12 \bigstrut\\
\hline
\hline
\multicolumn{14}{|c|}{$\lambda=1e-04 $} \bigstrut\\
\hline
\hline
$10^{-2}$ & 17    & 0.1   & 32    & 31    & 19    & 40    & 17    & 0.1   & 48    & 34    & 35    & 49    & 1.6e-13 \bigstrut\\
\hline
$10^{-4}$ & 109   & 0.5   & 226   & 195   & 123   & 243   & 109   & 0.5   & 226   & 195   & 123   & 243   & 1.7e-12 \bigstrut\\
\hline
$10^{-6}$ & 723   & 3.2   & 1482  & 1401  & 821   & 1625  & 251   & 1.9   & 743   & 625   & 633   & 634   & 1.3e-11 \bigstrut\\
\hline
$10^{-8}$ & 3087  & 13.9  & 6276  & 6426  & 3513  & 6976  & 247   & 2.1   & 723   & 645   & 653   & 626   & 1.4e-11 \bigstrut\\
\hline
$10^{-10}$ & 5266  & 23.7  & 10638 & 11244 & 5991  & 11930 & 244   & 1.9   & 711   & 672   & 678   & 624   & 6.0e-12 \bigstrut\\
\hline
\hline
\multicolumn{14}{|c|}{$\lambda=1e-05 $} \bigstrut\\
\hline
\hline
$10^{-2}$ & 16    & 0.1   & 30    & 28    & 18    & 38    & 15    & 0.1   & 42    & 32    & 33    & 45    & 3.1e-13 \bigstrut\\
\hline
$10^{-4}$ & 118   & 0.5   & 240   & 220   & 134   & 267   & 125   & 1.0   & 359   & 289   & 294   & 321   & 1.0e-10 \bigstrut\\
\hline
$10^{-6}$ & 859   & 3.9   & 1750  & 1595  & 978   & 1941  & 833   & 6.8   & 2470  & 2186  & 2199  & 2105  & 5.7e-10 \bigstrut\\
\hline
$10^{-8}$ & 5902  & 26.5  & 11918 & 11933 & 6716  & 13376 & 1179  & 9.6   & 3509  & 3336  & 3348  & 2998  & 1.4e-09 \bigstrut\\
\hline
$10^{-10}$ & 33127 & 150.7  & 66438 & 72792 & 37722 & 75353 & 1180  & 9.7   & 3508  & 3540  & 3555  & 2995  & 7.2e-10 \bigstrut\\
\hline
\end{tabular}%

\end{table}

\begin{table}[H]\scriptsize
 \caption{GeoPG-B and APG-B for solving linear regression with elastic net regularization on data set Gisette. Note that neither APG-B nor GeoPG-B converges in $10^5$ iterations when $\mu=1e-05 $ and $\alpha= 10^{-6},10^{-8},10^{-10}$.}\label{tab: leq gisette}

  \centering
\begin{tabular}{|c|c|c|c|c|c|c|c|c|c|c|c|c|c|}
\hline
      & \multicolumn{6}{c|}{APG-B}                    & \multicolumn{6}{c|}{GeoPG-B}                &  \bigstrut\\
\hline
$\alpha$ & iter  & cpu   & f-ev  & g-ev  & p-ev   & MVM   & iter & cpu   & f-ev  & g-ev  & p-ev   & MVM   & f-diff \bigstrut\\
\hline
\hline
\multicolumn{14}{|c|}{$\mu=1e-03 $} \bigstrut\\
\hline
\hline
$10^{-2}$ & 4026  & 198.1  & 8144  & 7729  & 4583  & 9121  & 4253  & 239.3  & 12593 & 10474 & 10506 & 10758 & 4.8e-14 \bigstrut\\
\hline
$10^{-4}$ & 30537 & 1504.2  & 61478 & 61380 & 34786 & 69371 & 6030  & 342.4  & 17939 & 17977 & 18006 & 15411 & 1.6e-13 \bigstrut\\
\hline
$10^{-6}$ & $-$   & $-$   & $-$   & $-$   & $-$   & $-$   & 5197  & 294.0  & 15419 & 16126 & 16159 & 13241 & $-$ \bigstrut\\
\hline
$10^{-8}$ & $-$   & $-$   & $-$   & $-$   & $-$   & $-$   & 5692  & 322.8  & 16950 & 18851 & 18881 & 14506 & $-$ \bigstrut\\
\hline
$10^{-10}$ & $-$   & $-$   & $-$   & $-$   & $-$   & $-$   & 6150  & 353.5  & 18295 & 23420 & 23450 & 15714 & $-$ \bigstrut\\
\hline
\hline
\multicolumn{14}{|c|}{$\mu=1e-04 $} \bigstrut\\
\hline
\hline
$10^{-2}$ & 6084  & 299.5  & 12288 & 12211 & 6930  & 13801 & 5406  & 304.3  & 16046 & 13623 & 13658 & 13675 & 1.1e-13 \bigstrut\\
\hline
$10^{-4}$ & 49467 & 2434.4  & 99880 & 100633 & 56333 & 112194 & 36606 & 2046.7  & 105023 & 112545 & 113414 & 91853 & 1.6e-13 \bigstrut\\
\hline
$10^{-6}$ & $-$   & $-$   & $-$   & $-$   & $-$   & $-$   & 20821 & 1179.7  & 62243 & 65886 & 65919 & 53105 & $-$ \bigstrut\\
\hline
$10^{-8}$ & $-$   & $-$   & $-$   & $-$   & $-$   & $-$   & 21575 & 1224.1  & 64488 & 71718 & 71753 & 54979 & $-$ \bigstrut\\
\hline
$10^{-10}$ & $-$   & $-$   & $-$   & $-$   & $-$   & $-$   & 20328 & 1164.9  & 60730 & 76896 & 76942 & 51908 & $-$ \bigstrut\\
\hline
\hline
\multicolumn{14}{|c|}{$\mu=1e-05 $} \bigstrut\\
\hline
\hline
$10^{-2}$ & 6570  & 323.9  & 13304 & 13289 & 7483  & 14885 & 4803  & 270.8  & 14228 & 11515 & 11547 & 12164 & 2.7e-13 \bigstrut\\
\hline
$10^{-4}$ & 56562 & 2791.0  & 114250 & 115944 & 64396 & 128230 & 38001 & 2153.4  & 113603 & 100036 & 100105 & 96725 & 5.6e-12 \bigstrut\\
\hline
\end{tabular}%

\end{table}

\begin{table}[htbp]\scriptsize
  \caption{GeoPG-B and APG-B for solving logistic regression with elastic net regularization. $p=6000, n=3000$} \label{tab:LR-Syn-1}

  \centering
\begin{tabular}{|c|c|c|c|c|c|c|c|c|c|c|c|c|c|}
\hline
      & \multicolumn{6}{c|}{APG-B}                    & \multicolumn{6}{c|}{GeoPG-B}                &  \bigstrut\\
\hline
$\alpha$ & iter  & cpu   & f-ev  & g-ev  & p-ev   & MVM   & iter & cpu   & f-ev  & g-ev  & p-ev   & MVM   & f-diff \bigstrut\\
\hline
\hline
\multicolumn{14}{|c|}{$\mu=1.00e-03  $} \bigstrut\\
\hline
\hline
$10^{-2}$ & 55    & 0.9   & 112   & 96    & 60    & 158   & 46    & 1.3   & 125   & 145   & 146   & 207   & 1.1e-13 \bigstrut\\
\hline
$10^{-4}$ & 256   & 4.3   & 536   & 470   & 289   & 761   & 55    & 1.7   & 144   & 194   & 194   & 269   & 5.6e-13 \bigstrut\\
\hline
$10^{-6}$ & 509   & 8.7   & 1048  & 972   & 577   & 1551  & 61    & 2.0   & 164   & 218   & 220   & 300   & 1.3e-12 \bigstrut\\
\hline
$10^{-8}$ & 573   & 9.5   & 1188  & 1086  & 649   & 1737  & 60    & 1.9   & 161   & 223   & 225   & 305   & 1.4e-12 \bigstrut\\
\hline
$10^{-10}$ & 585   & 9.6   & 1208  & 1112  & 663   & 1777  & 59    & 2.1   & 158   & 231   & 233   & 313   & 1.4e-12 \bigstrut\\
\hline
\hline
\multicolumn{14}{|c|}{$\mu=1.00e-04 $} \bigstrut\\
\hline
\hline
$10^{-2}$ & 51    & 0.7   & 104   & 80    & 55    & 137   & 51    & 1.3   & 141   & 167   & 164   & 236   & 2.5e-13 \bigstrut\\
\hline
$10^{-4}$ & 203   & 3.0   & 422   & 336   & 226   & 564   & 118   & 3.2   & 319   & 405   & 396   & 555   & 1.3e-11 \bigstrut\\
\hline
$10^{-6}$ & 954   & 14.7  & 1994  & 1662  & 1080  & 2744  & 126   & 3.8   & 335   & 452   & 450   & 614   & 3.1e-11 \bigstrut\\
\hline
$10^{-8}$ & 1814  & 28.7  & 3780  & 3311  & 2056  & 5369  & 125   & 3.7   & 336   & 454   & 454   & 614   & 2.6e-11 \bigstrut\\
\hline
$10^{-10}$ & 2135  & 33.9  & 4444  & 3952  & 2421  & 6375  & 125   & 4.0   & 336   & 475   & 475   & 635   & 3.2e-11 \bigstrut\\
\hline
\hline
\multicolumn{14}{|c|}{$\mu=1.00e-05  $} \bigstrut\\
\hline
\hline
$10^{-2}$ & 52    & 0.8   & 102   & 88    & 57    & 147   & 40    & 1.0   & 107   & 129   & 128   & 184   & 2.3e-13 \bigstrut\\
\hline
$10^{-4}$ & 141   & 1.9   & 288   & 208   & 154   & 364   & 97    & 2.4   & 257   & 316   & 309   & 438   & 3.3e-11 \bigstrut\\
\hline
$10^{-6}$ & 576   & 7.8   & 1246  & 804   & 646   & 1452  & 139   & 4.0   & 350   & 496   & 488   & 669   & 5.6e-11 \bigstrut\\
\hline
$10^{-8}$ & 2797  & 38.4  & 6070  & 4014  & 3166  & 7182  & 148   & 4.4   & 372   & 538   & 535   & 723   & 4.0e-10 \bigstrut\\
\hline
$10^{-10}$ & 4549  & 63.3  & 9862  & 6703  & 5151  & 11856 & 153   & 4.9   & 392   & 585   & 583   & 776   & 6.2e-10 \bigstrut\\
\hline
\end{tabular}%

\end{table}

\begin{table}[htbp]\scriptsize
    \caption{GeoPG-B and APG-B for solving logistic regression with elastic net regularization. $p=3000, n=6000$ } \label{tab:LR-Syn-2}

  \centering
\begin{tabular}{|c|c|c|c|c|c|c|c|c|c|c|c|c|c|}
\hline
      & \multicolumn{6}{c|}{APG-B}                    & \multicolumn{6}{c|}{GeoPG-B}                &  \bigstrut\\
\hline
$\alpha$ & iter  & cpu   & f-ev  & g-ev  & p-ev   & MVM   & iter & cpu   & f-ev  & g-ev  & p-ev   & MVM   & f-diff \bigstrut\\
\hline
\hline
\multicolumn{14}{|c|}{$\mu=1.00e-03  $} \bigstrut\\
\hline
\hline
$10^{-2}$ & 58    & 0.9   & 114   & 107   & 63    & 172   & 60    & 1.6   & 169   & 200   & 196   & 279   & 5.1e-14 \bigstrut\\
\hline
$10^{-4}$ & 253   & 4.1   & 516   & 466   & 284   & 752   & 110   & 3.5   & 292   & 420   & 412   & 562   & 1.9e-12 \bigstrut\\
\hline
$10^{-6}$ & 893   & 15.1  & 1824  & 1757  & 1012  & 2771  & 115   & 4.3   & 305   & 467   & 463   & 615   & 4.1e-12 \bigstrut\\
\hline
$10^{-8}$ & 1265  & 21.9  & 2584  & 2543  & 1435  & 3980  & 114   & 4.4   & 302   & 504   & 501   & 649   & 4.9e-12 \bigstrut\\
\hline
$10^{-10}$ & 1333  & 22.6  & 2712  & 2691  & 1513  & 4206  & 114   & 4.8   & 302   & 543   & 540   & 688   & 5.0e-12 \bigstrut\\
\hline
\hline
\multicolumn{14}{|c|}{$\mu=1.00e-04 $} \bigstrut\\
\hline
\hline
$10^{-2}$ & 56    & 0.8   & 112   & 89    & 60    & 151   & 42    & 1.1   & 116   & 133   & 132   & 188   & 1.4e-13 \bigstrut\\
\hline
$10^{-4}$ & 159   & 2.2   & 328   & 237   & 174   & 413   & 128   & 3.7   & 340   & 455   & 447   & 616   & 1.7e-11 \bigstrut\\
\hline
$10^{-6}$ & 750   & 11.3  & 1560  & 1238  & 845   & 2085  & 157   & 5.2   & 392   & 621   & 614   & 817   & 5.3e-11 \bigstrut\\
\hline
$10^{-8}$ & 1927  & 30.3  & 4012  & 3447  & 2182  & 5631  & 158   & 5.8   & 410   & 679   & 674   & 877   & 8.6e-11 \bigstrut\\
\hline
$10^{-10}$ & 2364  & 37.5  & 4934  & 4290  & 2677  & 6969  & 164   & 6.6   & 427   & 760   & 753   & 965   & 1.5e-10 \bigstrut\\
\hline
\hline
\multicolumn{14}{|c|}{$\mu=1.00e-05  $} \bigstrut\\
\hline
\hline
$10^{-2}$ & 54    & 0.8   & 108   & 85    & 58    & 145   & 42    & 1.1   & 110   & 136   & 134   & 191   & 2.9e-13 \bigstrut\\
\hline
$10^{-4}$ & 118   & 1.6   & 236   & 177   & 126   & 305   & 81    & 2.1   & 207   & 266   & 263   & 365   & 1.4e-11 \bigstrut\\
\hline
$10^{-6}$ & 493   & 6.4   & 1062  & 636   & 551   & 1189  & 153   & 4.9   & 365   & 588   & 580   & 776   & 2.9e-10 \bigstrut\\
\hline
$10^{-8}$ & 3492  & 45.0  & 7742  & 4365  & 3949  & 8316  & 163   & 5.8   & 379   & 686   & 677   & 886   & 8.3e-10 \bigstrut\\
\hline
$10^{-10}$ & 7655  & 98.4  & 17058 & 9498  & 8666  & 18166 & 169   & 6.8   & 403   & 782   & 775   & 990   & 1.7e-09 \bigstrut\\
\hline
\end{tabular}%

\end{table}%

\begin{table}[htbp]\scriptsize
\caption{GeoPG-B and APG-B for solving logistic regression with elastic net regularization. $p=3000, n=3000$ } \label{tab:LR-Syn-3}

  \centering
\begin{tabular}{|c|c|c|c|c|c|c|c|c|c|c|c|c|c|}
\hline
      & \multicolumn{6}{c|}{APG-B}                    & \multicolumn{6}{c|}{GeoPG-B}                &  \bigstrut\\
\hline
$\alpha$ & iter  & cpu   & f-ev  & g-ev  & p-ev   & MVM   & iter & cpu   & f-ev  & g-ev  & p-ev   & MVM   & f-diff \bigstrut\\
\hline
\hline
\multicolumn{14}{c|}{$\mu=1.00e-03  $} \bigstrut\\
\hline
\hline
$10^{-2}$ & 55    & 0.5   & 110   & 99    & 60    & 161   & 53    & 0.8   & 144   & 172   & 171   & 243   & 2.7e-13 \bigstrut\\
\hline
$10^{-4}$ & 278   & 2.4   & 566   & 512   & 312   & 826   & 90    & 1.4   & 237   & 325   & 322   & 442   & 2.7e-12 \bigstrut\\
\hline
$10^{-6}$ & 845   & 7.1   & 1732  & 1637  & 957   & 2596  & 89    & 1.5   & 234   & 336   & 334   & 452   & 2.6e-12 \bigstrut\\
\hline
$10^{-8}$ & 1158  & 9.7   & 2378  & 2283  & 1314  & 3599  & 89    & 1.6   & 234   & 361   & 359   & 477   & 2.6e-12 \bigstrut\\
\hline
$10^{-10}$ & 1186  & 9.9   & 2444  & 2340  & 1345  & 3687  & 88    & 1.7   & 231   & 377   & 375   & 492   & 2.8e-12 \bigstrut\\
\hline
\hline
\multicolumn{14}{c|}{$\mu=1.00e-04 $} \bigstrut\\
\hline
\hline
$10^{-2}$ & 55    & 0.4   & 108   & 89    & 60    & 151   & 53    & 0.7   & 144   & 172   & 169   & 242   & 3.5e-13 \bigstrut\\
\hline
$10^{-4}$ & 172   & 1.3   & 352   & 273   & 191   & 466   & 122   & 1.8   & 327   & 424   & 415   & 579   & 3.2e-11 \bigstrut\\
\hline
$10^{-6}$ & 868   & 6.6   & 1834  & 1455  & 980   & 2437  & 145   & 2.3   & 374   & 529   & 523   & 714   & 6.8e-11 \bigstrut\\
\hline
$10^{-8}$ & 1985  & 16.0  & 4168  & 3527  & 2248  & 5777  & 144   & 2.5   & 372   & 565   & 563   & 747   & 5.9e-11 \bigstrut\\
\hline
$10^{-10}$ & 2475  & 19.9  & 5160  & 4545  & 2807  & 7354  & 143   & 2.7   & 365   & 607   & 605   & 787   & 7.4e-11 \bigstrut\\
\hline
\hline
\multicolumn{14}{c|}{$\mu=1.00e-05  $} \bigstrut\\
\hline
\hline
$10^{-2}$ & 55    & 0.4   & 108   & 91    & 59    & 152   & 48    & 0.7   & 129   & 158   & 155   & 224   & 6.7e-13 \bigstrut\\
\hline
$10^{-4}$ & 126   & 0.9   & 256   & 185   & 137   & 324   & 126   & 0.9   & 256   & 185   & 137   & 324   & 2.0e-12 \bigstrut\\
\hline
$10^{-6}$ & 515   & 3.4   & 1108  & 680   & 576   & 1258  & 146   & 2.2   & 344   & 524   & 517   & 705   & 4.6e-10 \bigstrut\\
\hline
$10^{-8}$ & 3196  & 21.0  & 7054  & 4118  & 3615  & 7735  & 154   & 2.5   & 372   & 587   & 586   & 778   & 8.9e-10 \bigstrut\\
\hline
$10^{-10}$ & 6434  & 42.7  & 14228 & 8384  & 7284  & 15670 & 152   & 2.8   & 370   & 630   & 629   & 820   & 5.4e-10 \bigstrut\\
\hline
\end{tabular}%

\end{table}

\begin{table}[H]\scriptsize
\caption{GeoPG-B and APG-B for solving logistic regression with elastic net on dataset a9a }\label{tab: log a9a}

  \centering
\begin{tabular}{|c|c|c|c|c|c|c|c|c|c|c|c|c|c|}
\hline
      & \multicolumn{6}{c|}{APG-B}                    & \multicolumn{6}{c|}{GeoPG-B}                &  \bigstrut\\
\hline
$\alpha$ & iter  & cpu   & f-ev  & g-ev  & p-ev   & MVM   & iter & cpu   & f-ev  & g-ev  & p-ev   & MVM   & f-diff \bigstrut\\
\hline
\hline
\multicolumn{14}{|c|}{$\mu=1.00e-03  $} \bigstrut\\
\hline
\hline
$10^{-2}$ & 99    & 0.3   & 196   & 189   & 111   & 302   & 96    & 0.5   & 280   & 325   & 318   & 450   & 2.9e-15 \bigstrut\\
\hline
$10^{-4}$ & 676   & 1.8   & 1380  & 1317  & 766   & 2085  & 676   & 1.8   & 1380  & 1317  & 766   & 2085  & 1.7e-14 \bigstrut\\
\hline
$10^{-6}$ & 2696  & 6.8   & 5484  & 5466  & 3065  & 8533  & 187   & 1.0   & 540   & 663   & 683   & 885   & 2.6e-14 \bigstrut\\
\hline
$10^{-8}$ & 3911  & 9.8   & 7934  & 8114  & 4445  & 12561 & 188   & 1.0   & 545   & 654   & 678   & 876   & 2.0e-14 \bigstrut\\
\hline
$10^{-10}$ & 4324  & 10.9  & 8770  & 9013  & 4917  & 13932 & 200   & 1.1   & 581   & 758   & 783   & 991   & 5.9e-14 \bigstrut\\
\hline
\hline
\multicolumn{14}{|c|}{$\mu=1.00e-04 $} \bigstrut\\
\hline
\hline
$10^{-2}$ & 96    & 0.2   & 194   & 174   & 106   & 282   & 96    & 0.2   & 194   & 174   & 106   & 282   & 1.1e-14 \bigstrut\\
\hline
$10^{-4}$ & 709   & 1.7   & 1440  & 1388  & 805   & 2195  & 756   & 3.8   & 2251  & 2669  & 2577  & 3615  & 8.2e-13 \bigstrut\\
\hline
$10^{-6}$ & 5195  & 13.6  & 10488 & 10973 & 5912  & 16887 & 2581  & 13.4  & 7725  & 8995  & 8770  & 12112 & 4.4e-11 \bigstrut\\
\hline
$10^{-8}$ & 25300 & 64.8  & 50772 & 56141 & 28793 & 84936 & 716   & 3.7   & 2130  & 2529  & 2583  & 3427  & 9.9e-10 \bigstrut\\
\hline
$10^{-10}$ & 42633 & 109.4  & 85446 & 95447 & 48519 & 143968 & 723   & 3.8   & 2151  & 2584  & 2640  & 3497  & 7.9e-11 \bigstrut\\
\hline
\hline
\multicolumn{14}{|c|}{$\mu=1.00e-05  $} \bigstrut\\
\hline
\hline
$10^{-2}$ & 106   & 0.3   & 210   & 199   & 119   & 320   & 72    & 0.4   & 207   & 258   & 255   & 347   & 1.4e-14 \bigstrut\\
\hline
$10^{-4}$ & 770   & 1.9   & 1550  & 1526  & 874   & 2402  & 685   & 3.5   & 2038  & 2448  & 2367  & 3301  & 3.7e-12 \bigstrut\\
\hline
$10^{-6}$ & 5842  & 14.7  & 11762 & 12434 & 6648  & 19084 & 3026  & 15.5  & 9061  & 11099 & 10715 & 14750 & 2.1e-11 \bigstrut\\
\hline
$10^{-8}$ & 46819 & 119.9  & 93782 & 104946 & 53311 & 158259 & 7784  & 38.8  & 23335 & 26969 & 26326 & 36558 & 1.9e-12 \bigstrut\\
\hline
$10^{-10}$ & $-$   & $-$   & $-$   & $-$   & $-$   & $-$   & 1488  & 8.2   & 4447  & 5674  & 5721  & 7567  & $-$ \bigstrut\\
\hline
\end{tabular}%

\end{table}

\begin{table}[H]\scriptsize

  \caption{GeoPG-B and APG-B for solving logistic regression with elastic net on dataset RCV1 }\label{tab: log rcv1}
  \centering
\begin{tabular}{|c|c|c|c|c|c|c|c|c|c|c|c|c|c|}
\hline
      & \multicolumn{6}{c|}{APG-B}                    & \multicolumn{6}{c|}{GeoPG-B}                &  \bigstrut\\
\hline
$\alpha$ & iter  & cpu   & f-ev  & g-ev  & p-ev   & MVM   & iter & cpu   & f-ev  & g-ev  & p-ev   & MVM   & f-diff \bigstrut\\
\hline
\hline
\multicolumn{14}{|c|}{$\mu=1e-03 $} \bigstrut\\
\hline
\hline
$10^{-2}$ & 15    & 0.1   & 28    & 26    & 17    & 45    & 7     & 0.1   & 21    & 22    & 23    & 36    & 5.0e-14 \bigstrut\\
\hline
$10^{-4}$ & 35    & 0.2   & 68    & 61    & 37    & 100   & 30    & 0.3   & 83    & 91    & 93    & 134   & 7.7e-13 \bigstrut\\
\hline
$10^{-6}$ & 112   & 0.7   & 224   & 213   & 125   & 340   & 43    & 0.5   & 120   & 136   & 140   & 193   & 1.3e-12 \bigstrut\\
\hline
$10^{-8}$ & 196   & 1.2   & 390   & 384   & 220   & 606   & 39    & 0.5   & 110   & 137   & 138   & 191   & 4.3e-12 \bigstrut\\
\hline
$10^{-10}$ & 230   & 1.4   & 466   & 444   & 259   & 705   & 39    & 0.5   & 110   & 149   & 150   & 203   & 1.3e-12 \bigstrut\\
\hline
\hline
\multicolumn{14}{|c|}{$\mu=1e-04 $} \bigstrut\\
\hline
\hline
$10^{-2}$ & 13    & 0.1   & 24    & 22    & 15    & 39    & 11    & 0.1   & 32    & 35    & 36    & 56    & 3.0e-13 \bigstrut\\
\hline
$10^{-4}$ & 40    & 0.3   & 80    & 75    & 44    & 121   & 42    & 0.5   & 122   & 136   & 135   & 193   & 3.8e-12 \bigstrut\\
\hline
$10^{-6}$ & 178   & 1.0   & 368   & 311   & 200   & 513   & 153   & 1.8   & 431   & 542   & 527   & 738   & 6.2e-11 \bigstrut\\
\hline
$10^{-8}$ & 1039  & 6.2   & 2122  & 1941  & 1179  & 3122  & 137   & 1.6   & 384   & 495   & 503   & 673   & 1.9e-11 \bigstrut\\
\hline
$10^{-10}$ & 1983  & 11.7  & 4080  & 3724  & 2251  & 5977  & 137   & 1.7   & 379   & 526   & 524   & 705   & 2.5e-12 \bigstrut\\
\hline
\hline
\multicolumn{14}{|c|}{$\mu=1e-05 $} \bigstrut\\
\hline
\hline
$10^{-2}$ & 13    & 0.1   & 24    & 22    & 15    & 39    & 9     & 0.1   & 26    & 28    & 29    & 45    & 2.3e-13 \bigstrut\\
\hline
$10^{-4}$ & 42    & 0.2   & 84    & 71    & 46    & 119   & 39    & 0.4   & 108   & 120   & 122   & 173   & 2.3e-11 \bigstrut\\
\hline
$10^{-6}$ & 164   & 0.9   & 338   & 266   & 182   & 450   & 208   & 2.5   & 592   & 747   & 729   & 1012  & 1.4e-09 \bigstrut\\
\hline
$10^{-8}$ & 1115  & 6.4   & 2274  & 2013  & 1266  & 3281  & 377   & 4.7   & 1073  & 1436  & 1410  & 1901  & 5.5e-09 \bigstrut\\
\hline
$10^{-10}$ & 5569  & 33.5  & 11314 & 11135 & 6334  & 17471 & 486   & 6.2   & 1399  & 1930  & 1897  & 2542  & 6.5e-10 \bigstrut\\
\hline
\end{tabular}%

\end{table}

\begin{table}[H]\scriptsize

  \caption{GeoPG-B and APG-B for solving logistic regression with elastic net on dataset Gisette}\label{tab: log gisette}
  \centering
\begin{tabular}{|c|c|c|c|c|c|c|c|c|c|c|c|c|c|}
\hline
      & \multicolumn{6}{c|}{APG-B}                    & \multicolumn{6}{c|}{GeoPG-B}                &  \bigstrut\\
\hline
$\alpha$ & iter  & cpu   & f-ev  & g-ev  & p-ev   & MVM   & iter & cpu   & f-ev  & g-ev  & p-ev   & MVM   & f-diff \bigstrut\\
\hline
\hline
\multicolumn{14}{|c|}{$\mu=1e-03 $} \bigstrut\\
\hline
\hline
$10^{-2}$ & 630   & 40.4  & 1267  & 1176  & 717   & 1895  & 819   & 82.5  & 2298  & 2867  & 2790  & 3903  & 2.8e-14 \bigstrut\\
\hline
$10^{-4}$ & 2445  & 156.0  & 4923  & 4511  & 2784  & 7297  & 2177  & 217.5  & 6197  & 7710  & 7477  & 10406 & 3.9e-13 \bigstrut\\
\hline
$10^{-6}$ & 13950 & 915.2  & 28209 & 28106 & 15889 & 43997 & 2013  & 230.9  & 5654  & 7676  & 7737  & 10200 & 2.0e-12 \bigstrut\\
\hline
$10^{-8}$ & 64288 & 4397.1  & 129271 & 140483 & 73191 & 213676 & 1793  & 214.9  & 5033  & 7146  & 7188  & 9371  & 4.4e-14 \bigstrut\\
\hline
$10^{-10}$ & $-$   & $-$   & $-$   & $-$   & $-$   & $-$   & 1808  & 227.1  & 5079  & 7532  & 7559  & 9783  & $-$ \bigstrut\\
\hline
\hline
\multicolumn{14}{|c|}{$\mu=1e-04 $} \bigstrut\\
\hline
\hline
$10^{-2}$ & 913   & 57.7  & 1845  & 1744  & 1041  & 2787  & 961   & 93.7  & 2740  & 3335  & 3237  & 4553  & 3.5e-13 \bigstrut\\
\hline
$10^{-4}$ & 1889  & 113.1  & 3811  & 3246  & 2150  & 5398  & 913   & 57.7  & 1845  & 1744  & 1041  & 2787  & 3.9e-12 \bigstrut\\
\hline
$10^{-6}$ & 10206 & 614.4  & 20687 & 17730 & 11632 & 29364 & 2243  & 258.2  & 6044  & 8768  & 8763  & 11486 & 3.0e-11 \bigstrut\\
\hline
$10^{-8}$ & 53272 & 3405.7  & 107397 & 103641 & 60702 & 164345 & 2226  & 276.7  & 6001  & 9318  & 9300  & 12002 & 2.8e-11 \bigstrut\\
\hline
$10^{-10}$ & $-$   & $-$   & $-$   & $-$   & $-$   & $-$   & 2203  & 296.2  & 5926  & 9809  & 9812  & 12488 & $-$ \bigstrut\\
\hline
\hline
\multicolumn{14}{|c|}{$\mu=1e-05 $} \bigstrut\\
\hline
\hline
$10^{-2}$ & 975   & 63.2  & 1981  & 1882  & 1110  & 2994  & 795   & 79.8  & 2242  & 2738  & 2662  & 3747  & 6.5e-13 \bigstrut\\
\hline
$10^{-4}$ & 1485  & 91.1  & 3019  & 2632  & 1687  & 4321  & 1381  & 141.4  & 3760  & 4943  & 4829  & 6686  & 6.8e-12 \bigstrut\\
\hline
$10^{-6}$ & 4642  & 265.8  & 9439  & 7240  & 5286  & 12528 & 2928  & 313.0  & 7964  & 10940 & 10698 & 14554 & 1.5e-11 \bigstrut\\
\hline
$10^{-8}$ & 29242 & 1681.8  & 59811 & 46411 & 33325 & 79738 & 2946  & 374.2  & 7789  & 12617 & 12543 & 16128 & 5.5e-10 \bigstrut\\
\hline
$10^{-10}$ & $-$   & $-$   & $-$   & $-$   & $-$   & $-$   & 2776  & 436.6  & 7359  & 13607 & 13563 & 16936 & $-$ \bigstrut\\
\hline
\end{tabular}%

\end{table}

\begin{table}[t]\scriptsize
 \caption{L-GeoPG-B for solving logistic regression with elastic net regularization on data set Gisette}\label{tab: mem log gisette}
  \centering
    \begin{tabular}{|c|c|c|c|c|c|c|c|c|c|c|c|c|}
    \hline
     & \multicolumn{2}{c|}{$m=0$} & \multicolumn{2}{c|}{$m=5$} & \multicolumn{2}{c|}{$m=10$} & \multicolumn{2}{c|}{$m=20$} & \multicolumn{2}{c|}{$m=50$} & \multicolumn{2}{c|}{$m=100$} \bigstrut\\
\cline{1-13}    $\alpha$      & iter  & cpu   & iter  & cpu   & iter  & cpu   & iter  & cpu   & iter  & cpu   & iter  & cpu \bigstrut\\
    \hline
    \hline
    \multicolumn{13}{|c|}{$\mu=1e-03 $} \bigstrut\\
    \hline
    \hline
    $10^{-2}$ &819   & 82.5 & 1310  & 164.1 & 1015  & 125.8 & 902   & 97.0 & 713   & 76.6 & 769   & 93.6 \bigstrut\\
    \hline
    $10^{-4}$ &2177   &217.5 & {3656} & 470.8 &  {3439} & 417.9 & {3836} & 406.6 &  {2399} & 260.1& {1530} & 185.8\bigstrut\\
    \hline
    $10^{-6}$ &  2013    & 230.9 & 1606  & 235.9 & 1589  & 221.5 & 1547  & 225.4 & 1344  & 189.6& 1082  & 168.4 \bigstrut\\
    \hline
    $10^{-8}$ &  1793    & 214.9 & 1622  & 252.7 & 1530  & 224.4 & 1562  & 234.6 & 1363  & 200.8 & 1097  & 172.7 \bigstrut\\
    \hline
    $10^{-10}$ &  1808 &  227.1  &  1599 & 260.8  & 1549  &245.3& 1565  &246.9& 1369 & 216.9 &  1100  &180.8\bigstrut\\
    \hline
    \hline
    \multicolumn{13}{|c|}{$\mu=1e-04 $} \bigstrut\\
    \hline
    \hline
    $10^{-2}$ & 961  &93.7 &  {2573} & 312.6 & 2057  & 251.5 & 1487  & 169.6 & 1367 & 137.8 & 1217  & 130.0\bigstrut\\
    \hline
    $10^{-4}$ &2146  & 217.2 & 2237  & 312.3 & 2595  & 341.6 & 2621  & 314.0 & 2044  & 242.8& 1317  & 179.6 \bigstrut\\
    \hline
    $10^{-6}$ & 2243 & 258.2 & 2102  & 307.0& 2105  & 303.9 & 1979  & 292.2 & 1810  & 272.3 & 1390  & 219.9 \bigstrut\\
    \hline
    $10^{-8}$ &2226  & 276.7 & 2057  & 329.4 & 2009  & 317.6 & 1951  & 313.6 & 1791  & 288.1 & 1444  & 250.5\bigstrut\\
    \hline
    $10^{-10}$ &   2203  & 296.2  & 2046 &361.7&  2101  &342.1& 2002 & 338.3 &1846  &307.6& 1445 & 246.7\bigstrut\\
    \hline
    \hline
    \multicolumn{13}{|c|}{$\mu=1e-05 $} \bigstrut\\
    \hline
    \hline
    $10^{-2}$ & 795   & 79.8 &  {3501} & 407.2 &  {3022} & 359.3& 1375  & 166.75 & 1156  & 122.7 & 968   & 106.7 \bigstrut\\
    \hline
    $10^{-4}$ & 1381  & 141.4& 1461  & 219.2 & 1303  & 179.2 & 1621  & 213.7 & 1198  & 153.5 & 902   & 126.6 \bigstrut\\
    \hline
    $10^{-6}$ & 2928 & 313.0 & 2343  & 352.7 & 2271  & 336.2 & 2179  & 323.8& 2001  & 297.6 & 1601  & 256.3 \bigstrut\\
    \hline
    $10^{-8}$ & 2946  & 374.2& 2401  & 380.0 & 2349  & 380.5 & 2254  & 360.8& 2099  & 345.6& 1804  & 303.9 \bigstrut\\
    \hline
    $10^{-10}$ & 2776  &   436.7 & 2503  &432.4 &  2363  &414.3 &  2350  &409.4 &  2093 & 365.7 &  1826 & 320.2\bigstrut\\
    \hline
    \end{tabular}

\end{table}

\end{document}